\pdfoutput=1
\documentclass[letterpaper, 10pt, conference]{ieeeconf}
\IEEEoverridecommandlockouts
\usepackage{amsmath,amssymb,mathtools}
\usepackage{graphicx}
\usepackage{tikz}
\usetikzlibrary{calc,arrows.meta,positioning}
\usepackage[american]{circuitikz}
\usepackage{cite}
\usepackage[table]{xcolor}
\usepackage[acronym]{glossaries}
\definecolor{SkyBlue}{HTML}{56B4E9}
\definecolor{Vermillion}{HTML}{D55E00}

\tikzset{
  arch/base/.style={
    x=1cm, y=1cm, font=\fontsize{9}{10}\selectfont,
    line cap=round, line join=round
  },
  arch/wire/.style={draw=black, line width=0.55pt},
  arch/ext/.style={draw=SkyBlue, line width=0.9pt},
  arch/port/.style={
    circle, fill=white, draw=SkyBlue, line width=0.9pt,
    inner sep=1.1pt, outer sep=0pt
  },
  arch/junction/.style={
    circle, fill=Vermillion, draw=Vermillion, line width=0.9pt,
    inner sep=1.1pt, outer sep=0pt
  },
  arch/var/.style={
    circle, draw=black, fill=white, line width=0.55pt,
    minimum size=6.8mm, inner sep=0pt
  },
  arch/shared/.style={
    arch/var, draw=black, fill=Vermillion!25!white,
    text=white, line width=0.9pt, dashed
  },
  arch/factor/.style={
    rectangle, draw=black, fill=gray!25!white, line width=0.55pt,
    minimum width=6.8mm, minimum height=6.8mm, inner sep=1pt
  },
  arch/panel/.style={anchor=base, inner sep=0pt}
}

\newcommand{\ArchitecturePanelLabels}[3]{%
  \node[arch/panel] at (#1\columnwidth,-0.67) {Closed};
  \node[arch/panel] at (#2\columnwidth,-0.67) {Open};
  \node[arch/panel] at (#3\columnwidth,-0.67) {Interconnection};
}
\newcommand{\ArchitectureCanvas}{%
  \pgfresetboundingbox
  \path[use as bounding box]
    (0,-0.80) rectangle (\columnwidth,1.60);
}

\makeglossaries
\newacronym{QP}{QP}{quadratic program}
\newacronym{KKT}{KKT}{Karush--Kuhn--Tucker}
\newacronym{LTI}{LTI}{linear time-invariant}
\newacronym{VI}{VI}{variational inference}
\newacronym{KL}{KL}{Kullback--Leibler}

\newtheorem{theorem}{Theorem}
\newtheorem{lemma}[theorem]{Lemma}
\newtheorem{proposition}[theorem]{Proposition}

\newtheorem{remark}[theorem]{Remark}
\newtheorem{example}[theorem]{Example}


\newenvironment{proof}[1][Proof]{\par\noindent{\itshape #1: }}{\hfill${\blacksquare}$\par\smallskip}

\DeclareMathOperator*{\argmin}{arg\,min}
\DeclareMathOperator{\graph}{graph}
\DeclareMathOperator{\im}{image}
\DeclareMathOperator{\rank}{rank}
\DeclareMathOperator{\diag}{diag}
\DeclareMathOperator{\Ent}{h}
\DeclareMathOperator{\KL}{KL}
\newcommand{\spec}{\sigma}
\newcommand{\R}{\mathbb{R}}
\newcommand{\E}{\mathbb{E}}
\newcommand{\tp}{^{\mathsf{T}}}
\newcommand{\1}{\mathbf{1}}
\newcommand{\B}{\mathcal{B}}
\newcommand{\W}{\mathbb{W}}

\title{\LARGE \bf Open Quadratic Optimization Programs}

\author{Alberto Padoan%
\thanks{The author is with the Department of Electrical and Computer Engineering, University of British Columbia, Vancouver, BC, Canada. {\tt\small alberto.padoan@ubc.ca}. This work was supported by NSERC (RGPIN-2025-06895, DGECR-2025-00382).}%
}

\begin{document}
\maketitle
\thispagestyle{empty}
\pagestyle{empty}

\begin{abstract}
The paper studies open optimization programs, that is, parametric optimization programs viewed as open systems, and their interconnection. Each open optimization program is assigned a behavior and interconnection is defined through variable sharing. For a class of open \glspl{QP}, we characterize closure and well-posedness under interconnection and, for modules interconnected by adding their objectives, analyze the convergence of a sequential module-by-module solution method, deriving error bounds for the resulting approximation. We further illustrate that the variables through which modules are interconnected indicate an appropriate behavioral description, and each description requires dedicated interconnection rules. This perspective suggests a route toward compositional analysis and design of optimization architectures.
\end{abstract}

\section{Introduction}\label{sec:intro}

Optimization problems are rarely defined in isolation~\cite{DorflerHeBelgioioso2024}. In control architectures, an estimator supplies a state estimate to a planner, which in turn supplies a reference to a controller~\cite{matni2024controlarchitecture}. In physical systems, components exchange variables under conservation laws, with equilibria often characterized by minimizing an energy or dissipation functional~\cite{DoyleSnell1984,vdSJeltsema2014}. In probabilistic inference, modules exchange beliefs over shared latent variables while minimizing free energy~\cite{WainwrightJordan2008TR,Blei2017}. Once optimization programs are interconnected, however, solving each one in isolation is no longer a central question: one must understand instead the behavior generated by their interconnection. Crucially, such behavior depends not only on which programs are interconnected, but on \emph{how} they are interconnected and \emph{which variables} they exchange.

Systems theory provides a natural language to study such questions~\cite{matni2024controlarchitecture}.
Recent work has developed this perspective for optimization \textit{algorithms}~\cite{DorflerHeBelgioioso2024,eising_open_optimization}, viewing them as systems interacting with other algorithms and their environment.
This paper takes a complementary equilibrium perspective, modeling
\emph{optimization programs as open systems} with parameters as inputs,
optimizers as outputs, and corresponding input--output relations as the objects
of interconnection.
For convex \glspl{QP}, we ask when these interconnections are well posed and closed, how their solutions can be assembled from module-level computations, and which interface variables are sufficient to compose relevant quantities (\textit{e.g.},  optimizers, conjugate variables, and optimal values).

\subsubsection*{Contributions}
We model open optimization programs by their behavior, in the sense of behavioral systems theory~\cite{Willems2007,PolWil1998}, and define interconnection through variable sharing (Section~\ref{sec:oqp}). For a class of open \glspl{QP} with nonsingular \gls{KKT} matrices, we show that input--output behaviors are affine, characterize closure and well-posedness under interconnection by consistency and rank conditions that reduce to an eigenvalue test for single-loop architectures, and, for modules interconnected by adding their objectives, identify sequential module evaluation with a block Gauss--Seidel iteration~\cite{Varga2000,GolubVanLoan2013}, deriving convergence conditions and error bounds. For interfaces pairing conjugate variables, as in resistive electrical networks, we introduce the primal--dual behavior as the graph of the value-function gradient. We note that its Jacobian is a Schur complement of the \gls{KKT} matrix, that its graph has a Lagrangian direction space, so that port interconnection composes these behaviors by partial infimal convolution of value functions and Schur complementation of their Hessians (Section~\ref{sec:pd}). Finally, variational inference motivates augmenting the primal--dual behavior with the optimal value, since composing free energies requires the additive constant lost under differentiation. We show that the resulting behavior is a Legendrian submanifold, derive the addition law for conjugate variables and optimal values, and identify the correction that prevents the entropy of a shared belief from being counted twice (Section~\ref{sec:prob}).

\subsubsection*{Related work}
Interconnection as variable sharing is a central construction of behavioral systems theory~\cite{Willems2007,PolWil1998}. Compositional modeling of affine relations, signal-flow diagrams, and open linear systems appear in~\cite{BonchiEtAl2019,FongRapisardaSobocinski2016}. Classical sensitivity analysis and multiparametric programming describe how optimizers and values depend on parameters~\cite{Danskin1966,bonnans2000perturbation,BemporadMorari2002}. Duality and infimal convolution provide composition rules for convex functions~\cite{rockafellar1970convex,bauschke2017convex,SteinEtAl2026}. Numerical analysis of \gls{KKT} systems, block relaxation, and Schur complements is classical~\cite{BenziGolubLiesen2005,Varga2000,GolubVanLoan2013,zhang2005schur}, with Kron reduction providing the network counterpart of Schur complementation~\cite{DorflerBullo2013,FB-LNS}. Electrical realizations of \glspl{QP} can be traced to~\cite{dennis1959}. Port-Hamiltonian modeling emphasizes interconnection through conjugate port variables~\cite{vdSMaschke2013,vdSJeltsema2014}. Category theory describes passive networks by Lagrangian relations~\cite{BaezFong2018,ComfortKissinger2022,BoothEtAl2026}. Legendrian manifolds and their composition are standard in contact geometry~\cite{arnold1989,Limouzineau2016}. Architectural perspectives appear in network decomposition, layered control, and categorical formulations of optimal control~\cite{LowLapsley1999,SangiovanniVincentelli2007,ChiangLowCalderbankDoyle2007,matni2024controlarchitecture,HanksEtAl2024,SheEtAl2023}.

\subsubsection*{Paper organization}
Section~\ref{sec:apps} presents three motivating applications. Section~\ref{sec:oqp} develops our system-theoretic viewpoint, studying closure, well-posedness, and properties of a class of open \glspl{QP}. Sections~\ref{sec:pd} and~\ref{sec:prob} extends the framework to primal--dual and value descriptions, together with their interconnection rules. Proofs are collected in the Appendix.

\section{Motivating applications}\label{sec:apps}

\subsection{Control architectures}\label{ssec:control_architecture}

Modern control systems are typically organized as layered architectures, with estimation, planning, and control performed by separate modules that exchange information through well-defined interfaces~\cite{matni2024controlarchitecture,DorflerHeBelgioioso2024}.
We consider a simple architecture where each module solves a \gls{QP} and exchanged signals serve as interface variables, as illustrated in Fig.~\ref{fig:control_stack}.

\begin{figure}[h!]
\centering
\resizebox{\columnwidth}{!}{%
\begin{tikzpicture}
\fill [ fill=orange!25!white, thick, rounded corners]
      (1.5,7.2) rectangle (9.4,4.8) {};
\draw [fill=gray!25!white, draw=black, thick, rounded corners]
      (-0.5,6.5) rectangle (1,5.5)
      node[midway] {\textsf{system}};
\draw [ultra thick,-latex](2.3,6) -- (1,6);
\node at (2.0,6.28) {$u$};
\draw [fill=gray!25!white, draw=black, thick, rounded corners]
      (2.3,6.5) rectangle (3.8,5.5)
      node[midway] {\textsf{control}};
\draw [ultra thick,-latex](4.8,6) -- (3.8,6);
\node at (4.4,6.28) {$x_{\mathrm{ref}}$};
\draw [fill=gray!25!white, draw=black, thick, rounded corners]
      (4.8,6.5) rectangle (6.3,5.5)
      node[midway] {\textsf{planning}};
\draw [ultra thick,-latex](7.3,6) -- (6.3,6);
\node at (6.9,6.28) {$\hat x_1$};
\draw [fill=gray!25!white, draw=black, thick, rounded corners]
      (7.3,6.5) rectangle (8.8,5.5)
      node[midway] {$\begin{array}{cc}
       {\scalebox{.9}{$\textsf{state}$}}  \\[-2.5pt]
       {\scalebox{.9}{$\textsf{estimation}$}}
      \end{array}$};
\draw [black, ultra thick,-latex, rounded corners]
      (-0.5,6) -- (-1,6) -- (-1,4.5) -- (9.5,4.5) -- (9.5,6) -- (8.8,6);
\node at (9.25,6.28) {$\tilde x$};
\draw [black, ultra thick,-latex, rounded corners](6.8,6) -- (6.8,5) -- (3.05,5) -- (3.05,5.5);
\draw [black, ultra thick,-latex, rounded corners](1.8,6) -- (1.8,7) -- (8.05,7) -- (8.05,6.5);
\end{tikzpicture}}
\caption{Example of a modular control architecture (shaded) consisting of estimation, planning, and control modules interacting with a given system.}
\label{fig:control_stack}
\end{figure}

Consider a discrete-time \gls{LTI} system described by the equations
\begin{equation}\label{eq:system_linear}
x_{t+1} = F x_{t} + G u_{t},
\end{equation}
where ${x_t\in\R^n}$ and ${u_t\in\R^m}$ denote the state and input of the system over the time horizon ${\mathbb{T}=\{1,\ldots,T\}}$, respectively.

We now specify the three optimization modules.
The estimation module receives a noisy state sequence ${\tilde x\in\R^{nT}}$ and an input sequence ${u\in\R^{mT}}$, producing an initial state estimate ${\hat x_1\in\R^n}$. Assuming i.i.d. zero-mean Gaussian measurement noise with covariance ${\Sigma\succ0}$, the maximum-likelihood trajectory estimate ${\hat x\in\R^{nT}}$ is obtained by solving
\begin{equation}\label{eq:estimation_cls}
\min_{x\in\mathcal{X}(u)}\ \|x-\tilde x\|^2_{I \otimes \Sigma^{-1}},
\end{equation}
where ${\|z\|_M^2=z\tp Mz}$, ${\otimes}$ is the Kronecker product, and
\[
\mathcal{X}(u)=
\{\,x\in\R^{nT}\mid
x_{t+1}=Fx_t+Gu_t,\, t=1,\ldots,T-1\,\}.
\]
For each ${u\in\R^{mT}}$, the set ${\mathcal{X}(u)}$ is affine. Since ${\Sigma\succ 0}$, problem~\eqref{eq:estimation_cls} is a strictly convex \gls{QP} with a unique minimizer.

The planning module receives the initial state estimate
${\hat x_1}$ and an exogenous target state
${x_{\mathrm{goal}}\in\R^n}$, producing a dynamically feasible
reference trajectory ${x_{\mathrm{ref}}\in\R^{nT}}$.
A natural formulation is
\begin{equation}\label{eq:planning_cls}
\begin{alignedat}{2}
&\min_{(x,u)\in\R^{nT}\times\R^{mT}}\quad
&&\|x_T-x_{\mathrm{goal}}\|_P^2+\|u\|_S^2\\
&~~~~~~~\,\textup{s.t.}\quad
&&x\in\mathcal{X}(u),\qquad x_1=\hat x_1,
\end{alignedat}
\end{equation}
where ${P\succeq0}$ and ${S\succ0}$ penalize terminal error and control
effort, respectively. Problem~\eqref{eq:planning_cls} is again a \gls{QP}
parameterized by ${\hat x_1}$ and ${x_{\mathrm{goal}}}$. Since the
dynamics uniquely determine ${x}$ from ${(u,\hat x_1)}$ and ${S\succ0}$,
the minimizer is unique.

The control module receives the reference trajectory
${x_{\mathrm{ref}}}$ and the initial state estimate
${\hat x_1}$, producing an input sequence ${u\in\R^{mT}}$ by solving the problem
\begin{equation}\label{eq:control_cls}
\begin{alignedat}{2}
&\min_{(x,u)\in\R^{nT}\times\R^{mT}}\quad
&&\|x-x_{\mathrm{ref}}\|_Q^2+\|u\|_{R}^2\\
&~~~~~~~\,\textup{s.t.}\quad
&&x\in\mathcal{X}(u),\qquad x_1=\hat x_1,
\end{alignedat}
\end{equation}
where ${Q\succeq0}$ and ${R\succ0}$ are weighting matrices.
Problem~\eqref{eq:control_cls} is a \gls{QP} parameterized by ${x_{\mathrm{ref}}}$ and ${\hat x_1}$, with a unique minimizer, since~\eqref{eq:system_linear} determines ${x}$ from ${(u,\hat x_1)}$ and ${R\succ0}$.

For fixed exogenous data, the system and the measurement process together map ${u}$ to the noisy state sequence ${\tilde x}$, closing the loop via the estimation module. This raises a first question: when is the architecture well posed, and can its behavior be derived from those of the individual modules?

\subsection{Resistive electrical networks}\label{ssec:resistive_networks}

Physical systems often admit variational descriptions based on energy
and dissipation principles subject to conservation
laws~\cite{vdSJeltsema2014}. Following~\cite{FB-LNS}, we use resistive electrical networks as a
canonical example, whose current flows minimize power dissipation
according to \emph{Thomson's principle}~\cite{DoyleSnell1984}.

Consider a resistive electrical network modeled by a connected, undirected, weighted graph ${\mathbb{G}=(\mathbb{V},\mathbb{E})}$, with node set ${\mathbb{V}:=\{1,\ldots,n\}}$, edge set ${\mathbb{E}:=\{1,\ldots,m\}}$, and edge resistances ${r_{ij}>0}$ for each ${\{i,j\}\in\mathbb{E}}$.
Let ${c\in\R^n}$ denote the vector of external current injections and assume ${\1\tp c=0}$, where ${\1\in\R^n}$ is the vector of ones.
This condition balances total injected and extracted current and is necessary for solvability of Kirchhoff's current law~\cite{FB-LNS}. 
Fix an arbitrary orientation of the edges and let ${f\in\R^m}$ collect the directed edge flows ${f_{i\to j}}$, with ${f_{j\to i}=-f_{i\to j}}$ for ${\{i,j\}\in\mathbb{E}}$ and ${f_{i\to j}=0}$ otherwise.
Let ${B\in\R^{n\times m}}$ be the incidence matrix of the graph ${\mathbb{G}}$ and let ${R:=\diag(r_{ij})\in\R^{m\times m}}$ be the diagonal, positive definite matrix of edge resistances. With this setup, Kirchhoff's current law is
\begin{equation}\label{eq:kcl}
Bf=c.
\end{equation}
Thomson's principle states that the \emph{physical flow}, that is, the current distribution realized by the network at equilibrium, minimizes dissipated power among all flows satisfying~\eqref{eq:kcl}, and is thus the solution of the \gls{QP}
\begin{equation}\label{eq:thomson_cls}
\min_{f\in\R^{m}}\ \frac12 f\tp R f
\qquad
\textup{s.t.}\quad Bf=c.
\end{equation}
The corresponding \gls{KKT} conditions are
\begin{equation}\label{eq:thomson_kkt}
Rf+B\tp v=0,
\qquad
Bf=c,
\end{equation}
where ${v\in\R^n}$ is a Lagrange multiplier.
Interpreting ${v}$ as the vector of nodal potentials, the first equation in~\eqref{eq:thomson_kkt} is Ohm's law on each edge, while eliminating ${f}$ yields the weighted Laplacian equation ${Lv=-c}$, where ${L:=BR^{-1}B\tp}$ is the weighted graph Laplacian of ${\mathbb{G}}$~\cite{FB-LNS}.
Since the graph ${\mathbb{G}}$ is connected, ${\rank L=n-1}$ and ${\ker L=\mathrm{span}\{\1\}}$, so ${v}$ is unique up to an additive constant, while the minimizer ${f^\star}$ of~\eqref{eq:thomson_cls} is unique because ${R\succ0}$.

\begin{figure}[t]
\centering
\begin{circuitikz}[arch/base]
  \ctikzset{resistors/scale=0.6, bipoles/length=0.85cm,
            bipoles/thickness=1.2}

  \begin{scope}[xshift=0.15\columnwidth, scale=0.55]
    \draw[arch/wire]
      (-1,0) to[R] (1,0) to[R] (0,1.732) to[R] (-1,0);
  \end{scope}

  \begin{scope}[xshift=0.47\columnwidth, scale=0.55]
    \draw[arch/wire]
      (-1,0) to[R] (1,0) to[R] (0,1.732) to[R] (-1,0);
    \foreach \x/\y/\dx/\dy in {
      -1/0/-0.35/-0.25,
       1/0/ 0.35/-0.25,
       0/1.732/0/0.38}{
      \draw[arch/ext] (\x,\y) -- (\x+\dx,\y+\dy);
      \node[arch/port] at (\x+\dx,\y+\dy) {};
    }
  \end{scope}

  \begin{scope}[xshift=0.80\columnwidth, scale=0.55]
    \draw[arch/wire]
      (-1,0) to[R] (1,0) to[R] (0,1.732) to[R] (-1,0);
    \draw[arch/wire] (-1,0)    to[R] (-1.75,-0.55);
    \draw[arch/wire] ( 1,0)    to[R] ( 1.75,-0.55);
    \draw[arch/wire] (0,1.732) to[R] (0,2.65);
    \foreach \x/\y in {-1.75/-0.55,1.75/-0.55,0/2.65}
      \node[arch/port] at (\x,\y) {};
    \foreach \x/\y in {-1/0,1/0,0/1.732}
      \node[arch/junction] at (\x,\y) {};
  \end{scope}

  \ArchitecturePanelLabels{0.15}{0.47}{0.80}
  \ArchitectureCanvas
\end{circuitikz}
\caption{Closed (left), open (center), and interconnected (right) resistive networks.
Boundary terminals ({\color[HTML]{56B4E9}${\circ}$}) exchange currents and potentials
with the environment. Shared nodes ({\color[HTML]{D55E00}${\bullet}$}) mark 
interconnections.}
\label{fig:resistive_networks}
\end{figure}

Program~\eqref{eq:thomson_cls} models a \emph{closed} resistive network. 
A circuit is, however, naturally regarded as an \emph{open} system interacting with an environment (Fig.~\ref{fig:resistive_networks}).
We therefore partition the node set ${\mathbb{V}}$ into \emph{internal} and \emph{external} nodes: the former have prescribed current injections, while the latter serve as boundary terminals at which currents and potentials are exchanged with the environment.
Accordingly, we write
\begin{equation}\label{eq:resistive_partition}
c=\begin{bmatrix}c_{\mathrm{i}}\\c_{\mathrm{e}}\end{bmatrix},
\
v=\begin{bmatrix}v_{\mathrm{i}}\\v_{\mathrm{e}}\end{bmatrix},
\
B=\begin{bmatrix}B_{\mathrm{i}}\\B_{\mathrm{e}}\end{bmatrix},
\
L=\begin{bmatrix}L_{\mathrm{ii}}&L_{\mathrm{ie}}\\L_{\mathrm{ei}}&L_{\mathrm{ee}}\end{bmatrix},
\end{equation}
where the subscripts ${\mathrm{i}}$ and ${\mathrm{e}}$ indicate the internal and external nodes, respectively.
Solving ${Lv=-c}$ for ${v_{\mathrm{i}}}$ yields
\begin{equation}\label{eq:boundary_relation}
c_{\mathrm{e}}
=-\big(L_{\mathrm{ee}}-L_{\mathrm{ei}}L_{\mathrm{ii}}^{\dagger}L_{\mathrm{ie}}\big)v_{\mathrm{e}}
+L_{\mathrm{ei}}L_{\mathrm{ii}}^{\dagger}c_{\mathrm{i}},
\end{equation}
where ${{M}^\dagger}$ is the Moore--Penrose pseudoinverse of ${M\in\R^{p\times q}}$.
Eliminating the internal degrees of freedom yields the external representation~\eqref{eq:boundary_relation}
of the network that depends on its internal structure only through the Schur
complement of ${L}$. In circuit theory, this is known as Kron
reduction~\cite{DorflerBullo2013}.

Interconnecting two open resistive networks along their terminals enforces Kirchhoff's current and voltage laws,
\begin{equation}\label{eq:port_interconnection}
c_{1,\mathrm{e}}+c_{2,\mathrm{e}}=0,
\qquad
v_{1,\mathrm{e}}=v_{2,\mathrm{e}}.
\end{equation}
In contrast with Section~\ref{ssec:control_architecture}, the interconnection~\eqref{eq:port_interconnection} constrains \emph{both} the primal variables ${c}$ and the dual variables ${v}$ of the two \glspl{QP}. It relates a conjugate pair of variables of one module to the conjugate pair of the other, and is therefore defined by a \emph{pair} of equations rather than by a single one.
This raises a second question: which interconnections are described by a single equation between variables of adjacent modules, which require a pair of equations between conjugate variables, and what distinguishes the two?

\subsection{Variational inference}\label{ssec:vi}

Many inference tasks in machine learning may be phrased as optimization programs~\cite{WainwrightJordan2008TR}.
A canonical example is \gls{VI}, in which one approximates a posterior distribution by minimizing a free-energy functional~\cite{WainwrightJordan2008TR,Blei2017}.

Let ${p(x,z)}$ be the joint density of observed data ${x}$ and latent
variables ${z}$. Assume the evidence ${p(x):=\int p(x,z)\,dz}$ is positive and bounded everywhere, and define the posterior ${p(z\mid x):=p(x,z)/p(x)}$.
For densities ${q,p}$, the \gls{KL} divergence is
${\KL(q\,\|\,p):=\E_q[\log q-\log p]}$,
where ${\E_q}$ denotes expectation under ${q}$~\cite{CoverThomas2006}.
Let ${\mathcal{Q}}$ be a family of densities over ${z}$.
For ${q\in\mathcal{Q}}$ with finite \gls{KL} divergence from the posterior, direct
substitution gives~\cite{WainwrightJordan2008TR,Blei2017}
\begin{equation*}
\KL(q(z)\,\|\,p(z\mid x))
=\E_q\big[\log q(z)-\log p(x,z)\big]+\log p(x).
\end{equation*}
Consequently, minimizing the divergence over ${\mathcal{Q}}$ is equivalent to minimizing the \emph{variational free energy}
\begin{equation}\label{eq:vi_free_energy}
\mathcal{F}(q):=\E_q\big[\log q(z)-\log p(x,z)\big].
\end{equation}
A \gls{VI} problem is therefore the optimization program
\begin{equation}\label{eq:vi_closed}
\min_{q\in\mathcal{Q}}\ \mathcal{F}(q).
\end{equation}
Parameterizing ${\mathcal{Q}}$ by a vector ${\theta\in\Theta}$, with ${\Theta\subseteq\R^{\ell}}$, and writing ${q=q_\theta}$ induces the objective ${\Phi(\theta):=\mathcal{F}(q_\theta)}$. For a Gaussian family with fixed covariance and mean ${\theta}$,
and a log-quadratic model, ${\Phi}$ is a strictly convex quadratic
function of ${\theta}$. If ${\Theta}$ is affine,~\eqref{eq:vi_closed} is a \gls{QP}.

\begin{figure}[t]
\centering
\begin{tikzpicture}[arch/base]

  \begin{scope}[scale = 0.9, xshift=0.10\columnwidth, yshift=0.55cm]
    \node[arch/var]    (closed-t) at (-0.44,0) {$\theta$};
    \node[arch/factor] (closed-f) at ( 0.44,0) {$\Phi$};
    \draw[arch/wire] (closed-t) -- (closed-f);
  \end{scope}

  \begin{scope}[scale = 0.9, xshift=0.38\columnwidth, yshift=0.55cm]
    \node[arch/var]    (open-t) at (-0.25,0) {$\theta$};
    \node[arch/factor] (open-f) at ( 0.63,0) {$\Phi_{\mathrm{i}}$};
    \draw[arch/wire] (open-t) -- (open-f);
    \draw[arch/ext] (open-t) -- (-0.99,0);
    \node[arch/port] at (-0.99,0) {};
  \end{scope}

  \begin{scope}[scale=.9, xshift=0.83\columnwidth, yshift=0.55cm]
    \node[arch/var]    (t1) at (-1.92,0) {$\theta_1$};
    \node[arch/factor] (f1) at (-0.96,0) {$\Phi_1$};
    \node[arch/shared] (ts) at ( 0,0)    {\textcolor{black}{$\theta_s$}};
    \node[arch/factor] (f2) at ( 0.96,0) {$\Phi_2$};
    \node[arch/var]    (t2) at ( 1.92,0) {$\theta_2$};
    \foreach \a/\b in {t1/f1,f1/ts,ts/f2,f2/t2}
      \draw[arch/wire] (\a) -- (\b);
  \end{scope}

  \ArchitecturePanelLabels{0.10}{0.38}{0.73}
  \ArchitectureCanvas
\end{tikzpicture}
\caption{Closed (left), open (center), and interconnected (right) inference modules. 
Boundary terminals ({\color[HTML]{56B4E9}${\circ}$}) exchange conjugate variables with the environment. Shared nodes ({\color[HTML]{D55E00} dashed}) mark interconnections.}
\label{fig:factor}
\end{figure}

Program~\eqref{eq:vi_closed} models a \emph{closed} \gls{VI}  inference problem. 
In a modular inference pipeline, this is rarely the case: the objective splits as ${\Phi=\Phi_{\mathrm{i}}+\Phi_{\mathrm{e}}}$, with ${\Phi_{\mathrm{i}}}$ internal to the module and ${\Phi_{\mathrm{e}}}$ contributed by the environment, and the conjugate variables ${y:=\nabla\Phi_{\mathrm{i}}(\theta)}$ and ${u:=\nabla\Phi_{\mathrm{e}}(\theta)}$ are exchanged through a port, as illlustrated in Fig.~\ref{fig:factor} by a factor graph~\cite{KFL2001} where circles are beliefs and squares are potentials.
Two modules with potentials ${\Phi_1(\theta_s,\theta_1)}$ and ${\Phi_2(\theta_s,\theta_2)}$ sharing the belief ${\theta_s}$ are interconnected by the program
\begin{equation}\label{eq:composed_problem}
\min_{\theta_s,\theta_1,\theta_2}\ \Phi_1(\theta_s,\theta_1)+\Phi_2(\theta_s,\theta_2),
\end{equation}
whose stationarity condition in ${\theta_s}$ is the balance law ${y_1+y_2=0}$, with ${y_i:=\nabla_{\theta_s}\Phi_i}$.
The \emph{potentials} in~\eqref{eq:composed_problem} are chosen so that each contribution to the free energy is counted once, whereas adding the free energies of the two modules, each of which accounts for the shared belief, would count their common terms twice.
In contrast with Sections~\ref{ssec:control_architecture} and~\ref{ssec:resistive_networks}, the interconnection thus also composes \emph{values}, the free energies, in addition to variables and their conjugates.
This raises a third question: how do values compose under interconnection, and what must be transmitted so that they compose without double counting?

\section{Open optimization programs}\label{sec:oqp}

Consider the parametric optimization program
\begin{equation}\label{eq:open}
\begin{array}{cl}
\displaystyle\min_{x\in\R^{n}} & f(x,u)\\[3pt]
\textup{s.t.} & h(x,u)=0,\quad g(x,u)\le0,
\end{array}
\end{equation}
with objective ${f:\R^n\times\R^m\to\R}$ and constraints defined by ${h:\R^n\times\R^m\to\R^{p}}$ and ${g:\R^n\times\R^m\to\R^{s}}$.
We regard the parameter ${u\in\R^m}$ as an input supplied by an environment, and we call~\eqref{eq:open} an \emph{open optimization program}.
For each ${u\in\R^m}$, the \emph{feasible set} of~\eqref{eq:open} is the set of all ${x\in\R^n}$ satisfying the constraints of~\eqref{eq:open},
\begin{equation}\label{eq:feasible}
\mathcal{X}(u):=\{\,x\in\R^n\mid h(x,u)=0,\ g(x,u)\le0\,\},
\end{equation}
the \emph{value function} ${V:\R^m\to\R\cup\{\pm\infty\}}$ is
\begin{equation}\label{eq:value}
V(u):=\inf_{x\in\mathcal{X}(u)}f(x,u),
\end{equation}
and the \emph{solution map} ${X^\star}$ is the relation
\begin{equation}\label{eq:solmap}
u\mapsto X^\star(u):=\argmin_{x\in\mathcal{X}(u)}f(x,u).
\end{equation}
For ${f}$, ${g}$, and ${h}$ differentiable, ${f}$ and ${g}$ convex in the first argument for each ${u}$, and ${h}$ affine in the first argument,~\eqref{eq:open} is convex and, under a constraint qualification (e.g.,  Slater's condition), the \gls{KKT} conditions are necessary and sufficient for optimality~\cite{BoydVandenberghe2004,NocedalWright2006}.
Then ${X^\star(u)}$ is the projection onto the first components of the set of \gls{KKT} solutions of~\eqref{eq:open}.
Let ${\mathcal{L}(x,\lambda,\mu,u):=f(x,u)+\lambda\tp h(x,u)+\mu\tp g(x,u)}$ be the Lagrangian of~\eqref{eq:open}, with multipliers ${\lambda\in\R^{p}}$ and ${\mu\in\R^{s}_{+}}$. We call ${\nabla_u\mathcal{L}(x,\lambda,\mu,u)}$, evaluated at a primal--dual solution ${(x,\lambda,\mu)}$, the \emph{conjugate variable} of the input. 
If one assumes further that ${f}$ and ${g}$ are jointly convex in both arguments and that ${h}$ is affine in both arguments, then ${V}$ is convex, being the partial minimization of a jointly convex function.
Moreover, the conjugate variable is a subgradient of ${V}$ at ${u}$~\cite{Danskin1966,bonnans2000perturbation}.

In this paper, we focus on open optimization programs with quadratic objectives and equality constraints. Inequality constraints may be accommodated through the active set at a solution, under which the solution depends piecewise affinely on the parameter, as in multiparametric programming~\cite{BemporadMorari2002}.

\subsubsection{Open quadratic optimization programs}\label{ssec:oqp}
An \emph{open quadratic optimization program} is an open optimization program of the form
\begin{equation}\label{eq:oqp}
\begin{array}{cl}
\displaystyle\min_{x\in\R^{n}} & \dfrac12 x\tp Px+(q+Nu)\tp x\\[8pt]
\textup{s.t.} & Ax+Mu=b,
\end{array}
\end{equation}
with ${P\in\R^{n\times n}}$ symmetric, ${q\in\R^n}$, ${N\in\R^{n\times m}}$, ${A\in\R^{p\times n}}$, ${M\in\R^{p\times m}}$, and ${b\in\R^{p}}$, and with input ${u\in\R^m}$.
We assume throughout the paper that ${P}$ is positive semidefinite.
Then~\eqref{eq:oqp} is a convex program, whose \gls{KKT} conditions characterize optimality~\cite{BoydVandenberghe2004} and read
\begin{equation}\label{eq:kkt}
K\begin{bmatrix}x\\\lambda\end{bmatrix}
=\begin{bmatrix}-(q+Nu)\\b-Mu\end{bmatrix},
\qquad
K:=\begin{bmatrix}P&A\tp\\A&0\end{bmatrix},
\end{equation}
where ${\lambda\in\R^{p}}$ is a Lagrange multiplier.
The matrix ${K}$ in~\eqref{eq:kkt} is the \gls{KKT} matrix of~\eqref{eq:oqp}, and systems of this form are well studied~\cite{BenziGolubLiesen2005}.
We say that the input \emph{affects}~\eqref{eq:oqp} through the constraints if ${N=0}$, and through the objective if ${M=0}$.

\subsection{The behavior of an open quadratic optimization program}\label{ssec:behavior}

 In behavioral systems theory~\cite{Willems2007,PolWil1998}, a system is a triple ${\Sigma=(\mathbb{T},\W,\B)}$, where ${\mathbb{T}}$ is the time set, ${\W}$ is the signal space, and the behavior ${\B\subseteq\W^{\mathbb{T}}}$ is the set of all trajectories of the system.
The open programs studied in this paper are static, so ${\mathbb{T}}$ is a singleton and a system is modeled by a pair ${(\W,\B)}$, with ${\B\subseteq\W}$. We associate with~\eqref{eq:oqp} the signal space ${\W:=\R^m\times\R^n}$, whose elements are the pairs ${w=(u,x)}$ of input and decision variable, and the behavior ${\B:=\graph X^\star}$, the graph of the solution map~\eqref{eq:solmap}.

\begin{lemma}[Affine behavior]\label{lem:affine}
Consider the open quadratic optimization program~\eqref{eq:oqp}, with \gls{KKT} matrix ${K}$ as in~\eqref{eq:kkt} and behavior ${\B=\graph X^\star}$.
Assume ${P\succeq0}$ and ${K}$ is nonsingular.
Then ${\B}$ is an affine subspace of ${\R^m\times\R^n}$ of dimension ${m}$.
\end{lemma}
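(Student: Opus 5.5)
The plan is to show that, under the stated assumptions, the solution map $X^\star$ is single-valued and affine on all of $\R^m$. Its graph is then the image of $\R^m$ under an injective affine map, hence an affine subspace of dimension $m$.

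First I will use that~\eqref{eq:oqp} is convex with affine equality constraints, since $P\succeq0$. For convex problems with only affine equality constraints no further constraint qualification is needed, so the \gls{KKT} conditions~\eqref{eq:kkt} are necessary and sufficient for optimality. Thus $x\in X^\star(u)$ if and only if there is $\lambda\in\R^p$ such that $(x,\lambda)$ solves~\eqref{eq:kkt}. Since $K$ is nonsingular, for every $u$ this linear system has exactly one solution,
\[
\begin{bmatrix}x\\\lambda\end{bmatrix}=K^{-1}\left(\begin{bmatrix}-q\\b\end{bmatrix}+\begin{bmatrix}-N\\-M\end{bmatrix}u\right).
\]
Sufficiency shows that this $x$ is a minimizer, so $X^\star(u)\neq\emptyset$; in particular feasibility is automatic. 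Necessity shows that any minimizer must coincide with it, so $X^\star(u)$ is a singleton.

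Writing $\begin{bmatrix}I_n & 0\end{bmatrix}$ for the projection onto the $x$-block, we get $X^\star(u)=x_0+Gu$, where $x_0:=\begin{bmatrix}I_n&0\end{bmatrix}K^{-1}\begin{bmatrix}-q\\b\end{bmatrix}$ and $G:=-\begin{bmatrix}I_n&0\end{bmatrix}K^{-1}\begin{bmatrix}N\\M\end{bmatrix}$. Hence
\[
\B=\{(u,x_0+Gu)\mid u\in\R^m\}=(0,x_0)+\im\begin{bmatrix}I_m\\G\end{bmatrix},
\]
which is an affine subspace. The matrix $\begin{bmatrix}I_m\\G\end{bmatrix}$ has full column rank $m$ because of its identity block, so the direction space has dimension $m$.

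The main point needing care is the necessity direction of \gls{KKT}, which rules out minimizers without multipliers. I will justify it by noting that for linear equality constraints the standard Lagrange multiplier theorem holds without regularity of $A$. The gradient $Px+q+Nu$ at a minimizer must be orthogonal to $\ker A$, hence lies in $\im A\tp$. This also covers a possibly rank-deficient $A$, although nonsingularity of $K$ in fact forces $A$ to have full row rank.
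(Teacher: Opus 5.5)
Your proposal is correct and follows essentially the same route as the paper. Because the program is convex with affine equality constraints, the KKT conditions are necessary and sufficient; nonsingularity of $K$ gives a unique KKT solution affine in $u$; and the behavior is therefore the image of the injective affine map $u\mapsto(u,x^\star(u))$, of dimension $m$ (your justification of KKT necessity via $\nabla_x f\in(\ker A)^\perp=\im A\tp$ just spells out what the paper cites from a textbook).
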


\subsubsection{Interconnection and well-posedness}\label{ssec:interconnection}

Consider ${k}$ open quadratic optimization programs with signal spaces ${\W_i}$ and behaviors ${\B_i}$, ${i=1,\ldots,k}$.
Set ${\W:=\W_1\times\cdots\times\W_k}$ and let ${\pi_i:\W\to\W_i}$ denote the projection onto the variables ${w_i=(u_i,x_i)}$ of the ${i}$-th program.
In behavioral systems theory, interconnection is variable sharing~\cite{Willems2007,PolWil1998}.
Motivated by this paradigm, we model an interconnection of open \glspl{QP} by an affine subspace ${\mathcal{C}\subseteq\W}$ defined by equations identifying variables of distinct programs, and define their \emph{interconnected behavior} as
\begin{equation}\label{eq:interconnected}
\B:=\mathcal{C}\cap\bigcap_{i=1}^{k}\pi_i^{-1}(\B_i).
\end{equation}
The interconnection is \emph{well posed} if ${\B}$ is a singleton.
If ${P_i\succeq0}$ and ${K_i}$ is nonsingular for every ${i}$, then by Lemma~\ref{lem:affine} each ${\B_i}$ is an affine subspace, so that there exist matrices ${D_i}$ and vectors ${d_i}$ such that ${\B_i=\{w_i\in\W_i\mid D_iw_i=d_i\}}$, and likewise there exist ${D_0}$ and ${d_0}$ such that ${\mathcal{C}=\{w\in\W\mid D_0w=d_0\}}$.
Since each ${\pi_i}$ is linear, ${\B=\{w\in\W\mid Dw=d\}}$ with
\begin{subequations}\label{eq:stacked}
\begin{align}
D&:=\begin{bmatrix}D_0\tp&(D_1\pi_1)\tp&\cdots&(D_k\pi_k)\tp\end{bmatrix}\tp,\\
d&:=\begin{bmatrix}d_0\tp&d_1\tp&\cdots&d_k\tp\end{bmatrix}\tp.
\end{align}
\end{subequations}
Let ${R_\star}$ be the matrix selecting a block ${w_\star=R_\star w}$ of shared variables. We say that the interconnection has a \emph{single loop} if there exist matrices ${E}$, ${\Xi}$ and vectors ${e}$, ${b_\star}$, with ${R_\star E=I}$ and ${R_\star e=0}$, such that
${\B=\{\,Ez+e\mid z=\Xi z+b_\star\,\}. }$
We refer to ${\Xi}$ as the \emph{loop map}. Since ${R_\star(Ez+e)=z}$, the map ${z\mapsto Ez+e}$ is a bijection from the solution set of the loop equation ${z=\Xi z+b_\star}$ onto ${\B}$. Throughout the rest of the paper, ${\spec(X)}$ denotes the spectrum of a square
matrix ${X}$. For a partition ${w=(w_{\mathrm{in}},w_{\mathrm{out}})}$ into inputs
and outputs, write ${D=\begin{bmatrix}D_{\mathrm{in}}&D_{\mathrm{out}}\end{bmatrix}}$
for the corresponding column partition.

\begin{proposition}[Closure and well-posedness]\label{thm:wp}
Consider ${k}$ open quadratic optimization programs with matrices ${P_i}$ and ${K_i}$ of the form~\eqref{eq:oqp} and~\eqref{eq:kkt}, ${i=1,\dots,k}$, behaviors ${\B_1,\dots,\B_k}$, an interconnection ${\mathcal{C}}$, the interconnected behavior ${\B}$ in~\eqref{eq:interconnected}, the matrices ${D}$ and ${d}$ in~\eqref{eq:stacked}, and a partition ${w=(w_{\mathrm{in}},w_{\mathrm{out}})}$ of the variables, with ${D=\begin{bmatrix}D_{\mathrm{in}}&D_{\mathrm{out}}\end{bmatrix}}$.
Assume ${P_i\succeq0}$ and ${K_i}$ is nonsingular for ${i=1,\dots,k}$.
Then the following statements hold.
\begin{enumerate}
\item[(i)] ${\B}$ is the behavior of an open quadratic optimization program with input ${w_{\mathrm{in}}}$, decision variable ${w_{\mathrm{out}}}$, and nonsingular \gls{KKT} matrix if and only if ${\B\neq\emptyset}$, ${D_{\mathrm{out}}}$ has full column rank, and ${\im D_{\mathrm{in}}\subseteq\im D_{\mathrm{out}}}$.
\item[(ii)] ${\B}$ is a singleton if and only if ${Dw=d}$ is consistent and ${D}$ has full column rank.
\item[(iii)] If the interconnection has a single loop, with loop map ${\Xi}$ and vector ${b_\star}$, then ${\B}$ is a singleton if and only if ${1\notin\spec(\Xi)}$, in which case ${w_\star=(I-\Xi)^{-1}b_\star}$ and ${{w=Ew_\star+e}.}$
\end{enumerate}
\end{proposition}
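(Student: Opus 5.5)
The plan is to reduce all three statements to linear algebra on the affine set ${\B=\{w\mid Dw=d\}}$. This description of ${\B}$ is available because Lemma~\ref{lem:affine} makes each ${\B_i}$ affine, and the stacked representation~\eqref{eq:stacked} then applies. Statement (ii) is immediate. ${\B}$ is nonempty exactly when ${Dw=d}$ is consistent. A nonempty affine set ${\{w\mid Dw=d\}}$ is a translate of ${\ker D}$, so it is a singleton if and only if ${\ker D=\{0\}}$, that is, if and only if ${D}$ has full column rank.

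For (i), the key observation is that, by Lemma~\ref{lem:affine} and its proof, the behaviors of open \glspl{QP} with nonsingular \gls{KKT} matrix are exactly the graphs of affine maps ${u\mapsto Fu+f}$ defined on all of ${\R^m}$. The ``only if'' direction uses the forward inclusion. The ``if'' direction needs a realization: given an affine map ${w_{\mathrm{out}}=Fw_{\mathrm{in}}+f}$, take the program with ${P=I}$, ${A}$ empty (${p=0}$), ${q=-f}$ and ${N=-F}$. Its minimizer is ${x=Fu+f}$, and its \gls{KKT} matrix ${K=I}$ is nonsingular and ${P\succeq0}$. So (i) reduces to the claim that ${\B}$ is the graph of an affine map of ${w_{\mathrm{in}}}$ defined everywhere if and only if ${\B\neq\emptyset}$, ${D_{\mathrm{out}}}$ has full column rank, and ${\im D_{\mathrm{in}}\subseteq\im D_{\mathrm{out}}}$.

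\emph{Sufficiency.} Fix ${w_{\mathrm{in}}}$. Pick ${\bar w\in\B}$; then ${d-D_{\mathrm{in}}w_{\mathrm{in}}=D_{\mathrm{in}}(\bar w_{\mathrm{in}}-w_{\mathrm{in}})+D_{\mathrm{out}}\bar w_{\mathrm{out}}}$, which lies in ${\im D_{\mathrm{out}}}$ by the range inclusion. Hence the equation ${D_{\mathrm{out}}w_{\mathrm{out}}=d-D_{\mathrm{in}}w_{\mathrm{in}}}$ is solvable. Full column rank makes the solution unique, namely ${w_{\mathrm{out}}=D_{\mathrm{out}}^{\dagger}(d-D_{\mathrm{in}}w_{\mathrm{in}})}$, which is affine in ${w_{\mathrm{in}}}$.

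\emph{Necessity.} Assume ${\B}$ is such a graph. Nonemptiness is immediate. Uniqueness of ${w_{\mathrm{out}}}$ for a fixed ${w_{\mathrm{in}}}$ forces ${\ker D_{\mathrm{out}}=\{0\}}$: if ${D_{\mathrm{out}}v=0}$ with ${v\neq0}$, adding ${v}$ to ${w_{\mathrm{out}}}$ would produce a second point of ${\B}$. Solvability for every ${w_{\mathrm{in}}}$ gives ${d-D_{\mathrm{in}}w_{\mathrm{in}}\in\im D_{\mathrm{out}}}$ for all ${w_{\mathrm{in}}}$. Differencing this at two values of ${w_{\mathrm{in}}}$ yields ${\im D_{\mathrm{in}}\subseteq\im D_{\mathrm{out}}}$.

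For (iii), I will use the bijection ${z\mapsto Ez+e}$ between the solutions of ${z=\Xi z+b_\star}$ and ${\B}$, which the paper establishes just before the statement. Under this bijection, ${\B}$ is a singleton if and only if ${(I-\Xi)z=b_\star}$ has exactly one solution. There is one subtlety here. A square system with ${1\in\spec(\Xi)}$ may have zero solutions or infinitely many, and in neither case is the solution set a singleton. Conversely, ${1\notin\spec(\Xi)}$ gives the unique solution ${z=(I-\Xi)^{-1}b_\star}$. Applying ${R_\star}$ to the relation ${w=Ez+e}$ and using ${R_\star E=I}$ and ${R_\star e=0}$ gives ${w_\star=z}$, which yields the stated formulas for ${w_\star}$ and ${w}$.

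The main obstacle is (i). Both directions require matching the abstract statement ``${\B}$ is the behavior of an open \gls{QP} with nonsingular \gls{KKT} matrix'' with ``${\B}$ is the graph of a globally defined affine map''. This rests on the realization construction above and on using Lemma~\ref{lem:affine} in the sharper form that the behavior is an affine graph over all of ${\R^m}$.
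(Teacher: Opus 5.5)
Your proposal is correct and follows the paper's proof essentially step by step. Part (ii) uses $\B=w_0+\ker D$, part (iii) uses the bijection $z\mapsto Ez+e$ together with nonsingularity of $I-\Xi$, and part (i) uses existence and uniqueness of solutions of $D_{\mathrm{out}}w_{\mathrm{out}}=d-D_{\mathrm{in}}w_{\mathrm{in}}$ plus a realizing unconstrained QP. The only difference is cosmetic: you realize the affine map with $P=I$, $N=-F$, $q=-f$, whereas the paper uses the least-squares program with $P=D_{\mathrm{out}}\tp D_{\mathrm{out}}$, and both give nonsingular KKT matrices.
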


\subsubsection{Computation by sequential module evaluation}
When the modules are implemented separately and communicate only through their interfaces, it is natural to evaluate them one at a time in a fixed order, each using the most recent values of the shared variables.
We analyze this scheme for two open quadratic optimization programs interconnected by adding their objectives, as in~\eqref{eq:composed_problem}, which amounts to sharing an input and its conjugate variable.
The scheme is then a block Gauss--Seidel iteration~\cite{Varga2000,GolubVanLoan2013}, and the results below quantify when it converges, at what rate, and the error incurred after a single round of communication.

Consider two open quadratic optimization programs, the first with input ${\xi}$ and the second with decision variable ${\xi}$, and the program obtained by adding their objectives and imposing the constraints of both,
\begin{equation}\label{eq:joint}
\begin{array}{cl}
\displaystyle\min_{x,\,\xi} & \tfrac12x\tp P_1x+(q_1+N_1\xi)\tp x+\tfrac12\xi\tp P_2\xi+q_2\tp\xi\\[6pt]
\textup{s.t.} & A_1x+M_1\xi=b_1,\qquad A_2\xi=b_2,
\end{array}
\end{equation}
in the variables ${x\in\R^{n_1}}$ and ${\xi\in\R^{n_2}}$.
With ${w_1:=(x,\lambda_1)}$ and ${w_2:=(\xi,\lambda_2)}$, the \gls{KKT} system of~\eqref{eq:joint} is
\begin{equation}\label{eq:block}
\begin{bmatrix}K_1&L\\L\tp&K_2\end{bmatrix}
\begin{bmatrix}w_1\\w_2\end{bmatrix}
=\begin{bmatrix}r_1\\r_2\end{bmatrix},
\end{equation}
with
\begin{equation}\label{eq:CZ}
\begin{aligned}
K_i&:=\begin{bmatrix}P_i&A_i\tp\\A_i&0\end{bmatrix},&
r_i&:=\begin{bmatrix}-q_i\\b_i\end{bmatrix},\\
L&:=\begin{bmatrix}C&0\end{bmatrix},&
C&:=\begin{bmatrix}N_1\\M_1\end{bmatrix}.
\end{aligned}
\end{equation}
Let ${y_1:=N_1\tp x+M_1\tp\lambda_1}$ be the conjugate variable of the input of the first program, so that the stationarity condition of~\eqref{eq:joint} in ${\xi}$ reads ${y_1+P_2\xi+q_2+A_2\tp\lambda_2=0}$.
When the modules are implemented separately, the first program receives ${\xi}$ and returns ${y_1}$, and the second program receives ${y_1}$ and solves ${\min_{\xi}\{\tfrac12\xi\tp P_2\xi+(q_2+y_1)\tp\xi : A_2\xi=b_2\}}$, whose \gls{KKT} system is ${K_2w_2=r_2-L\tp w_1}$.
Evaluating the second program at the most recent ${y_1}$ and then the first at the resulting ${\xi}$ gives the block Gauss--Seidel iteration
\begin{equation}\label{eq:iter}
\begin{aligned}
w_2^{(j+1)}&=K_2^{-1}\big(r_2-L\tp w_1^{(j)}\big),\\
w_1^{(j+1)}&=K_1^{-1}\big(r_1-Lw_2^{(j+1)}\big),
\end{aligned}
\end{equation}
and the \emph{interaction matrices} of~\eqref{eq:block} are
\begin{equation}\label{eq:gamma}
\Gamma:=K_2^{-1}L\tp K_1^{-1}L,
\qquad
\widetilde\Gamma:=K_1^{-1}LK_2^{-1}L\tp .
\end{equation}
In the following statement, ${\|\cdot\|}$ denotes a vector norm and the matrix norm it induces, and ${\rho(X)}$ the spectral radius of a square matrix ${X}$.

\begin{proposition}[Module-wise computation]\label{prop:gs}
Consider the system~\eqref{eq:block} with the matrices~\eqref{eq:CZ}, the iteration~\eqref{eq:iter}, and the matrices~\eqref{eq:gamma}.
Assume ${K_1}$ and ${K_2}$ are nonsingular.
Then the following statements hold.
\begin{enumerate}
\item[(i)] The system~\eqref{eq:block} has a unique solution if and only if ${1\notin\spec(\Gamma)}$.
\item[(ii)] The spectral radii of ${\Gamma}$ and ${\widetilde\Gamma}$ coincide: ${\rho(\Gamma)=\rho(\widetilde\Gamma)}$.
\item[(iii)] If ${1\notin\spec(\Gamma)}$ and ${w^\star}$ denotes the solution of~\eqref{eq:block}, the iterates of~\eqref{eq:iter} satisfy
\[
\begin{aligned}
w_1^{(j+1)}-w_1^\star&=\widetilde\Gamma\,(w_1^{(j)}-w_1^\star),\\
w_2^{(j+1)}-w_2^\star&=-K_2^{-1}L\tp(w_1^{(j)}-w_1^\star).
\end{aligned}
\]
The iteration then converges to ${w^\star}$ from every initialization ${w_1^{(0)}}$ if and only if ${\rho(\Gamma)<1}$.
Moreover,
\[
\|w_1^{(j)}-w_1^\star\|\le\|\widetilde\Gamma^{\,j}\|\,\|w_1^{(0)}-w_1^\star\|,
\]
and ${\|\widetilde\Gamma^{\,j}\|^{1/j}\to\rho(\Gamma)}$ as ${j\to\infty}$.
\end{enumerate}
\end{proposition}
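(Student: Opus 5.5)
The plan is to reduce everything to the Schur complement of the block matrix in~\eqref{eq:block}. For (i), since $K_1$ is nonsingular, block elimination gives
\[
\begin{bmatrix}K_1&L\\L\tp&K_2\end{bmatrix}
=\begin{bmatrix}I&0\\L\tp K_1^{-1}&I\end{bmatrix}
\begin{bmatrix}K_1&0\\0&K_2-L\tp K_1^{-1}L\end{bmatrix}
\begin{bmatrix}I&K_1^{-1}L\\0&I\end{bmatrix},
\]
and $K_2-L\tp K_1^{-1}L=K_2(I-\Gamma)$. With $K_2$ nonsingular, the block matrix is nonsingular if and only if $I-\Gamma$ is nonsingular, that is, $1\notin\spec(\Gamma)$. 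Unique solvability of a square linear system is equivalent to nonsingularity of its coefficient matrix, so (i) follows. Note that the system is square because $K_i$ has size $n_i+p_i$ and $L$ is $(n_1+p_1)\times(n_2+p_2)$.

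For (ii), write $\Gamma=XY$ and $\widetilde\Gamma=YX$ with $X:=K_2^{-1}L\tp$ and $Y:=K_1^{-1}L$. For rectangular $X,Y$ the products $XY$ and $YX$ share their nonzero eigenvalues, with multiplicities, by Sylvester's determinant identity $\det(I-tXY)=\det(I-tYX)$. Hence their spectral radii coincide. The same fact also shows that $1\notin\spec(\Gamma)$ if and only if $1\notin\spec(\widetilde\Gamma)$, which I will use in (iii).

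For (iii), the solution $w^\star$ of~\eqref{eq:block} is a fixed point of~\eqref{eq:iter}: the second block row gives $w_2^\star=K_2^{-1}(r_2-L\tp w_1^\star)$, and the first gives $w_1^\star=K_1^{-1}(r_1-Lw_2^\star)$. Subtracting these from the iteration gives $w_2^{(j+1)}-w_2^\star=-K_2^{-1}L\tp(w_1^{(j)}-w_1^\star)$. Substituting this into the $w_1$ update gives
\[
w_1^{(j+1)}-w_1^\star=K_1^{-1}LK_2^{-1}L\tp(w_1^{(j)}-w_1^\star)=\widetilde\Gamma(w_1^{(j)}-w_1^\star).
\]
Iterating yields $w_1^{(j)}-w_1^\star=\widetilde\Gamma^{\,j}(w_1^{(0)}-w_1^\star)$, and the norm bound follows by submultiplicativity of the induced norm. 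Gelfand's formula gives $\|\widetilde\Gamma^{\,j}\|^{1/j}\to\rho(\widetilde\Gamma)=\rho(\Gamma)$ by (ii).

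For the convergence equivalence, $w_1^{(j)}\to w_1^\star$ for every initialization if and only if $\widetilde\Gamma^{\,j}\to0$, which holds if and only if $\rho(\widetilde\Gamma)<1$. Convergence of $w_2^{(j)}$ then follows from the error relation for $w_2$. This equivalence is the one delicate point. Necessity requires that some error vector not decay. Since the initialization $w_1^{(0)}$ is arbitrary, the initial error ranges over all vectors, so if $\rho\ge1$ one can pick an eigenvector (a real or complex-part combination) that does not decay. I expect this to be the only step needing care, and it is standard. Finally, (ii) converts the condition $\rho(\widetilde\Gamma)<1$ into $\rho(\Gamma)<1$.
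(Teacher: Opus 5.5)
Your proposal is correct and follows the paper's proof essentially step by step. Part (i) uses the Schur complement $K_2(I-\Gamma)$, part (ii) uses the same factorization $\Gamma=XY$, $\widetilde\Gamma=YX$, and part (iii) uses the same error recursion together with Gelfand's formula. The only cosmetic differences are that you use a full block $LDU$ factorization where the paper uses one-sided elimination, and Sylvester's determinant identity (which also gives multiplicities, not needed here) where the paper uses a direct eigenvector argument.
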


\noindent
We now quantify the error after one iteration.
Let the columns of ${Z}$ form a basis of ${\ker A_2}$ and let ${\widehat P_2:=Z\tp P_2Z}$ be the \emph{reduced Hessian} of the second program, that is, the Hessian of its objective on the tangent space of its constraints~\cite{NocedalWright2006}.
Let ${V_1}$ be the value function of the first program, let ${\xi^\star}$ be the primal component of ${w_2^\star}$, and let ${\xi^{(1)}}$ be the primal component of ${w_2^{(1)}}$ produced by~\eqref{eq:iter} from ${w_1^{(0)}=0}$.
The solution of~\eqref{eq:block} is the \gls{KKT} point of~\eqref{eq:joint}, and it is a minimizer whenever the objective of~\eqref{eq:joint} is convex on the affine set defined by the constraints.

\begin{lemma}[Error analysis]\label{lem:error}
Consider the program~\eqref{eq:joint}, its \gls{KKT} system~\eqref{eq:block} with the matrices~\eqref{eq:CZ}, the iteration~\eqref{eq:iter} initialized at ${w_1^{(0)}=0}$, and the reduced Hessian ${\widehat P_2}$.
Assume ${P_i\succeq0}$ and ${K_i}$ nonsingular for ${i=1,2}$, and ${1\notin\spec(\Gamma)}$.
Then ${\widehat P_2\succ0}$ and
\begin{equation}\label{eq:error}
\begin{aligned}
\xi^{(1)}-\xi^\star&=Z\widehat P_2^{-1}Z\tp\,\nabla V_1(\xi^\star),\\
\nabla V_1(\xi^\star)&=N_1\tp x^\star+M_1\tp\lambda_1^\star .
\end{aligned}
\end{equation}
Moreover, ${\xi^{(1)}=\xi^\star}$ for all ${q_1,q_2,b_1,b_2}$ if and only if ${CZ=0}$, that is, ${\ker A_2\subseteq\ker N_1\cap\ker M_1}$.
\end{lemma}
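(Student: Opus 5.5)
The plan is to compare the first half-step of \eqref{eq:iter} with the exact \gls{KKT} system \eqref{eq:block}, reading the difference as a perturbed \gls{KKT} system of the second program. The structure is: (a) establish $\widehat P_2\succ0$; (b) compute $w_2^{(1)}-w_2^\star$ explicitly through the null-space basis $Z$; (c) identify the resulting right-hand side with $\nabla V_1(\xi^\star)$; (d) characterize when the error vanishes identically.

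\emph{Step (a).} First I show $\widehat P_2\succ0$ from $P_2\succeq0$ and nonsingularity of $K_2$. If $v\neq0$ satisfies $Z\tp P_2Zv=0$, then $v\tp Z\tp P_2 Z v=0$. Since $P_2\succeq0$, this forces $P_2Zv=0$. Then $(Zv,0)$ lies in $\ker K_2$, because $A_2Zv=0$. Since $Z$ has full column rank, $Zv\neq0$, which contradicts nonsingularity. Hence $\widehat P_2$ is invertible and positive semidefinite, so positive definite. Nonsingularity of $K_2$ also gives that $A_2$ has full row rank, although I do not expect to need this.

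\emph{Step (b).} With $w_1^{(0)}=0$ the iteration gives $K_2w_2^{(1)}=r_2$, while the second block row of \eqref{eq:block} gives $K_2w_2^\star=r_2-L\tp w_1^\star$. Subtracting, $\delta:=w_2^{(1)}-w_2^\star=(\delta_\xi,\delta_\lambda)$ solves
\[
K_2\delta=L\tp w_1^\star=\begin{bmatrix}C\tp w_1^\star\\0\end{bmatrix}.
\]
Since $C\tp w_1^\star=N_1\tp x^\star+M_1\tp\lambda_1^\star=y_1^\star$, this reads $P_2\delta_\xi+A_2\tp\delta_\lambda=y_1^\star$ and $A_2\delta_\xi=0$. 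The second equation lets me write $\delta_\xi=Zv$. Multiplying the first equation by $Z\tp$ eliminates $\delta_\lambda$ and gives $\widehat P_2v=Z\tp y_1^\star$. Therefore $\delta_\xi=Z\widehat P_2^{-1}Z\tp y_1^\star$.

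\emph{Step (c).} It remains to identify $y_1^\star$ with $\nabla V_1(\xi^\star)$. By Lemma~\ref{lem:affine} and nonsingularity of $K_1$, the first program's solution $(x(\xi),\lambda_1(\xi))$ is affine in $\xi$, so $V_1$ is a quadratic and hence differentiable. I will differentiate $V_1(\xi)=\mathcal L_1(x(\xi),\lambda_1(\xi),\xi)$ and apply the chain rule. The terms involving $\partial x/\partial\xi$ and $\partial\lambda_1/\partial\xi$ vanish by stationarity and feasibility, which is the envelope/Danskin argument cited earlier. This leaves $\nabla_\xi\mathcal L_1=N_1\tp x+M_1\tp\lambda_1$. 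The point to check carefully is that the KKT matrix of the first program has exactly the sign convention of \eqref{eq:kkt}, so that the signs match.

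\emph{Step (d).} The ``moreover'' part is where I expect the only real obstacle: showing that $w_1^\star$ can be made arbitrary. Sufficiency is immediate: if $CZ=0$, then $Z\tp y_1^\star=(CZ)\tp w_1^\star=0$, so $\delta_\xi=0$. For necessity, I will choose the data to realize any target. Fix any $\xi^\star$ and any $w\in\R^{n_1+p_1}$. Set $r_1:=K_1w+C\xi^\star$, which is achievable by choosing $q_1,b_1$ freely, so that the first block row of \eqref{eq:block} holds with $w_1^\star=w$. Then set $b_2:=A_2\xi^\star$ and $q_2:=-C\tp w-P_2\xi^\star$ with $\lambda_2=0$, so that the second block row holds too. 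Uniqueness of the solution, from $1\notin\spec(\Gamma)$ via Proposition~\ref{prop:gs}(i), makes this the solution $w^\star$. Vanishing of the error for all data then forces $\widehat P_2^{-1}Z\tp C\tp w=0$ for all $w$, i.e.\ $CZ=0$. Finally, $CZ=0$ is equivalent to $N_1Z=0$ and $M_1Z=0$, which is exactly $\ker A_2\subseteq\ker N_1\cap\ker M_1$.
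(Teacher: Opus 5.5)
Your proposal is correct and follows the same route as the paper's proof. You subtract the two $K_2$ solves to get $K_2(w_2^{(1)}-w_2^\star)=L\tp w_1^\star$, apply the null-space method, identify $C\tp w_1^\star$ with $\nabla V_1(\xi^\star)$, and then show that $w_1^\star$ can be made arbitrary.

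Your envelope computation rederives Theorem~\ref{thm:pd}(i), which the paper simply cites. You should add the one line, as the paper does, that the first block row $K_1w_1^\star=r_1-C\xi^\star$ makes $(x^\star,\lambda_1^\star)$ the KKT point of the first program at $\xi^\star$.

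The remaining differences are local. You prove $\widehat P_2\succ0$ directly instead of citing the KKT nonsingularity criterion. For necessity, you fix the solution and back out $(q_1,b_1,q_2,b_2)$, whereas the paper writes $w_1^\star=S_1^{-1}(r_1-LK_2^{-1}r_2)$ with $S_1:=K_1-LK_2^{-1}L\tp$ nonsingular and varies $r_1$ alone. Both arguments rest only on nonsingularity of the matrix in \eqref{eq:block}.
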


\subsubsection{The control architecture}\label{ssec:stack-solved}

We return to the first question of Section~\ref{ssec:control_architecture}.
Assume the \gls{KKT} matrices of the modules~\eqref{eq:estimation_cls}--\eqref{eq:control_cls} are nonsingular.
By Lemma~\ref{lem:affine}, the shared variables satisfy affine relations, which together with the response of~\eqref{eq:system_linear} over the horizon we write as
\begin{subequations}\label{eq:maps}
\begin{align}
\hat x_1&=S_{\mathrm{est}}^{x}\tilde x+S_{\mathrm{est}}^{u}u+s_{\mathrm{est}},\\
x_{\mathrm{ref}}&=S_{\mathrm{pl}}^{x}\hat x_1+S_{\mathrm{pl}}^{g}x_{\mathrm{goal}}+s_{\mathrm{pl}},\\
u&=S_{\mathrm{ctrl}}^{x}\hat x_1+S_{\mathrm{ctrl}}^{r}x_{\mathrm{ref}}+s_{\mathrm{ctrl}},\\
\tilde x&=S_{\mathrm{sys}}u+s_{\mathrm{sys}}.
\end{align}
\end{subequations}
The behavior of the architecture is the solution set of~\eqref{eq:maps}, that is, the intersection~\eqref{eq:interconnected} of the four relations.
Eliminating ${\tilde x}$, ${\hat x_1}$, and ${x_{\mathrm{ref}}}$, which are affine functions of ${u}$, gives ${u=\Xi u+b_\star}$, with
\[
\Xi:=(S_{\mathrm{ctrl}}^{x}+S_{\mathrm{ctrl}}^{r}S_{\mathrm{pl}}^{x})(S_{\mathrm{est}}^{x}S_{\mathrm{sys}}+S_{\mathrm{est}}^{u}),
\]
so that the interconnection has a single loop with ${w_\star=u}$.
By Proposition~\ref{thm:wp}, the architecture is well posed if and only if ${1\notin\spec(\Xi)}$, in which case ${u^\star=(I-\Xi)^{-1}b_\star}$ and the remaining variables follow from~\eqref{eq:maps}.
Evaluating the four relations in the order of the loop is the iteration ${u^{(j+1)}=\Xi u^{(j)}+b_\star}$, which converges to ${u^\star}$ from every initialization if and only if ${\rho(\Xi)<1}$.
\section{Primal--dual description}\label{sec:pd}

The graph of the solution map~\eqref{eq:solmap} captures the optimizer of a program, but not its multiplier.
It is therefore inadequate at capturing interconnections such as~\eqref{eq:port_interconnection}, which constrain primal and dual variables jointly.

\subsection{The primal--dual behavior}\label{ssec:pdbehavior}

Consider the open \gls{QP}~\eqref{eq:oqp} with ${K}$ nonsingular, and let ${x^\star(u)}$ and ${\lambda^\star(u)}$ denote the primal and dual components of the solution of~\eqref{eq:kkt}.
The conjugate variable of the input is
\begin{equation}\label{eq:rho}
y:=N\tp x^\star(u)+M\tp\lambda^\star(u).
\end{equation}
We associate with~\eqref{eq:oqp} the signal space ${\W^{\mathrm{pd}}:=\R^m\times\R^m}$ and the \emph{primal--dual behavior}
\begin{equation}\label{eq:pdbehavior}
\B^{\mathrm{pd}}:=\{\,(u,y)\in\W^{\mathrm{pd}}\mid y=N\tp x^\star(u)+M\tp\lambda^\star(u)\,\},
\end{equation}
the graph of the map from the input to its conjugate variable.
Consider the resistive network of Section~\ref{ssec:resistive_networks} with prescribed internal injections ${c_{\mathrm{i}}}$ and prescribed external potentials ${v_{\mathrm{e}}}$, that is, the open program ${\min_{f\in\R^m}\{\tfrac12f\tp Rf+v_{\mathrm{e}}\tp B_{\mathrm{e}}f : B_{\mathrm{i}}f=c_{\mathrm{i}}\}}$ with input ${v_{\mathrm{e}}}$.
Its \gls{KKT} conditions are the first equation in~\eqref{eq:thomson_kkt} together with ${B_{\mathrm{i}}f=c_{\mathrm{i}}}$, with ${\lambda=v_{\mathrm{i}}}$, and its \gls{KKT} matrix is nonsingular, since ${B_{\mathrm{i}}}$ has full row rank when ${\mathbb{G}}$ is connected and has at least one external node.
Its boundary relation~\eqref{eq:boundary_relation} is exactly~\eqref{eq:pdbehavior}, with ${u=v_{\mathrm{e}}}$ and ${y=c_{\mathrm{e}}}$.

We now recall two standard notions from symplectic geometry~\cite{arnold1989}.
The \emph{standard symplectic form} on ${\R^m\times\R^m}$ is ${\omega((u_1,y_1),(u_2,y_2)):=y_2\tp u_1-y_1\tp u_2}$, and a subspace of ${\R^m\times\R^m}$ is \emph{Lagrangian} if it has dimension ${m}$ and ${\omega}$ vanishes on it.
With ${C:=\begin{bmatrix}N\tp&M\tp\end{bmatrix}\tp}$, the matrix ${-C\tp K^{-1}C}$ is the Schur complement of ${K}$ in ${\big[\begin{smallmatrix}K&C\\C\tp&0\end{smallmatrix}\big]}$~\cite{zhang2005schur}.

\begin{theorem}[Primal--dual behavior]\label{thm:pd}
Consider the open quadratic optimization program~\eqref{eq:oqp}, with \gls{KKT} matrix ${K}$, the matrix ${C}$ defined above, value function ${V}$, and primal--dual behavior~\eqref{eq:pdbehavior}.
Assume ${P\succeq0}$ and ${K}$ is nonsingular.
Then the following statements hold.
\begin{enumerate}
\item[(i)] ${V}$ is a quadratic function of ${u}$ with ${\nabla V(u)=y}$, so that ${\B^{\mathrm{pd}}}$ is the graph of ${\nabla V}$, and ${y=Su+s}$ with
\begin{equation}\label{eq:schur}
S=\nabla^2V=-\,C\tp K^{-1}C .
\end{equation}
\item[(ii)] The linear part of ${\B^{\mathrm{pd}}}$, that is ${\{(u,Su):u\in\R^m\}}$, is a Lagrangian subspace of ${\R^m\times\R^m}$.
\item[(iii)] If the input affects~\eqref{eq:oqp} through the constraints, then ${V}$ is convex and ${S\succeq0}$.
If the input affects~\eqref{eq:oqp} through the objective, then ${V}$ is concave and ${S\preceq0}$.
In general, ${S}$ need not be sign definite.
\end{enumerate}
\end{theorem}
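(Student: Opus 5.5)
Since $K$ is nonsingular, the \gls{KKT} system~\eqref{eq:kkt} has a unique solution for every $u$, namely $(x^\star(u),\lambda^\star(u))=K^{-1}(r-Cu)$ with $r:=\begin{bmatrix}-q\tp & b\tp\end{bmatrix}\tp$. Here we use that the right-hand side of~\eqref{eq:kkt} equals $r-Cu$ with $C=\begin{bmatrix}N\tp&M\tp\end{bmatrix}\tp$. By convexity ($P\succeq0$) these points are optimal, so $V(u)=\tfrac12x^\star\tp Px^\star+(q+Nu)\tp x^\star$, and $V$ is quadratic in $u$ because $x^\star$ is affine in $u$.

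For (i), I will compute the gradient via an envelope argument. Differentiating $\mathcal{L}(x^\star(u),\lambda^\star(u),u)=V(u)$, the $x$- and $\lambda$-partials vanish by stationarity and feasibility, leaving $\nabla V(u)=\nabla_u\mathcal{L}=N\tp x^\star+M\tp\lambda^\star=y$. Equivalently, one can write the KKT solution $z^\star=(x^\star,\lambda^\star)$ and use the identity $V=\tfrac12 z^{\star\tp}Kz^\star-z^{\star\tp}(r-Cu)+\lambda^{\star\tp}(b-Mu)$ plus a direct chain-rule check. Then $y=C\tp z^\star=C\tp K^{-1}r-C\tp K^{-1}Cu$, which gives $S=-C\tp K^{-1}C=\nabla^2V$ and $s=C\tp K^{-1}r$. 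The identification with the Schur complement is immediate from the definition.

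For (ii), the subspace $\{(u,Su)\}$ is a graph over $\R^m$, hence has dimension $m$. Moreover, $\omega((u_1,Su_1),(u_2,Su_2))=u_2\tp S\tp u_1-u_1\tp S\tp u_2=0$ because $S$ is symmetric, and $S$ is symmetric since $K$ is. So the subspace is Lagrangian.

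For (iii), the step I expect to need care is the sign computation; it uses the block structure of $K^{-1}$.
\begin{itemize}
\item If $N=0$, then $C=\begin{bmatrix}0\\M\end{bmatrix}$ and $S=-M\tp(K^{-1})_{22}M$. With $Z$ a basis of $\ker A$, nonsingularity of $K$ forces $A$ to have full row rank and $Z\tp PZ\succ0$. The standard formula gives $(K^{-1})_{22}=-(AP_Z^{\dagger}A\tp)^{-1}$ in the null-space sense; more robustly, I will show $-(K^{-1})_{22}\succeq0$ by noting that for $N=0$ the input enters only through the constraints. In that case $V$ is the partial minimization over $x$ of a function jointly convex in $(x,u)$ (convex objective, affine constraint in both), so $V$ is convex and $S=\nabla^2V\succeq0$.
\item If $M=0$, then $V(u)=\min_{x\in\mathcal{X}}\{\tfrac12x\tp Px+q\tp x+u\tp N\tp x\}$ is a pointwise infimum of affine functions of $u$ over a fixed feasible set, hence concave, and $S\preceq0$.
\end{itemize}
For the lack of a definite sign in general, I will give a scalar counterexample in which both effects appear, for instance $n=2$, $m=1$, chosen so that $S$ has both signs across choices. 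Simpler still is a case with $m=2$ and decoupled blocks, one input through the constraint and one through the objective, which yields $S=\diag(+,-)$.
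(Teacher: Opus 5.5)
Your proposal is correct and follows the paper's proof essentially step for step. For (i), the paper evaluates the Lagrangian in closed form, $V(u)=-\tfrac12 d(u)^\top K^{-1}d(u)$ with $d(u)=-(r-Cu)$, and differentiates, while your envelope/chain-rule argument reaches the same formula $\nabla V=C^\top z^\star=y$. Parts (ii) and (iii) match the paper: symmetry of $S$, partial minimization of a jointly convex function, the pointwise infimum of affine functions, and a decoupled counterexample with $S=\diag(-1,1)$.

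Drop two side remarks, though. The ``equivalent'' identity you quote carries a spurious term $+\lambda^{\star\top}(b-Mu)$; the correct identity is $V=\tfrac12 z^{\star\top}Kz^\star-z^{\star\top}(r-Cu)$. Also, an example with $m=1$ cannot exhibit an indefinite $S$, so keep the envelope argument and your $m=2$ decoupled example instead.
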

\noindent
Theorem~\ref{thm:pd} recovers the Lagrangian relation between
terminal currents and potentials that underlies the compositional
theory of passive linear networks~\cite{BaezFong2018}. For the network of Section~\ref{ssec:resistive_networks}, the symmetry of ${S}$ is the reciprocity of resistive multiports~\cite{AndersonVongpanitlerd1973}.

\subsection{Interconnection of primal--dual behaviors}\label{ssec:portint}

Consider two open quadratic optimization programs with matrices ${P_i}$ and ${K_i}$ as in~\eqref{eq:oqp} and~\eqref{eq:kkt} and value functions ${V_i(u_i,c_i)}$, ${i=1,2}$, where ${c_1,c_2\in\R^{\kappa}}$ are shared variables, and let ${c_{\mathrm{s}}\in\R^{\kappa}}$ be given.
The \emph{port interconnection} of the two programs is the pair of constraints
\begin{equation}\label{eq:portgen}
c_1+c_2=c_{\mathrm{s}},
\qquad
\nabla_{c_1}V_1(u_1,c_1)=\nabla_{c_2}V_2(u_2,c_2).
\end{equation}
The corresponding \emph{interconnected program} minimizes the sum of the objectives of the two programs over their decision variables and over ${(c_1,c_2)}$, subject to the constraints of both programs and to ${c_1+c_2=c_{\mathrm{s}}}$, and its value function ${V_{12}}$ depends on ${u:=(u_1,u_2)}$.
The \emph{partial infimal convolution} of ${V_1}$ and ${V_2}$ over the shared variables~\cite{rockafellar1970convex} is defined as
\begin{equation}\label{eq:infconv}
(V_1\,\Box\,V_2)(u):=\inf_{c_1+c_2=c_{\mathrm{s}}}\big\{V_1(u_1,c_1)+V_2(u_2,c_2)\big\}.
\end{equation}
If ${P_i\succeq0}$ and ${K_i}$ is nonsingular for ${i=1,2}$, by Theorem~\ref{thm:pd} the function ${(u,c_1)\mapsto V_1(u_1,c_1)+V_2(u_2,c_{\mathrm{s}}-c_1)}$ is quadratic, and we denote its Hessian by
\[
H:=\begin{bmatrix}H_{uu}&H_{uc}\\H_{cu}&H_{cc}\end{bmatrix},
\]

\begin{theorem}[Port interconnection]\label{thm:port}
Consider two open quadratic optimization programs with matrices ${P_i}$ and ${K_i}$ as in~\eqref{eq:oqp} and~\eqref{eq:kkt}, value functions ${V_1}$ and ${V_2}$, shared variables ${c_1,c_2\in\R^{\kappa}}$, the port interconnection~\eqref{eq:portgen}, the corresponding interconnected program with value function ${V_{12}}$, and the matrix ${H}$ defined above.
Assume that
\begin{enumerate}
\item[(H1)] ${P_i\succeq0}$ and ${K_i}$ is nonsingular for ${i=1,2}$.
\item[(H2)] the input ${(u_i,c_i)}$ affects the ${i}$-th program through the constraints, ${i=1,2}$.
\item[(H3)] ${H_{cc}\succ0}$.
\end{enumerate}
Then the following statements hold.
\begin{enumerate}
\item[(i)] ${V_{12}=V_1\,\Box\,V_2}$, and the infimum in~\eqref{eq:infconv} is attained exactly at the unique ${(c_1,c_2)}$ satisfying~\eqref{eq:portgen}.
\item[(ii)] ${(x_1,x_2,c_1,c_2)}$ is optimal for the interconnected program if and only if ${(c_1,c_2)}$ satisfies~\eqref{eq:portgen} and ${x_i}$ is optimal for the ${i}$-th program at ${c_i}$, ${i=1,2}$.
\item[(iii)] ${V_{12}}$ is quadratic, with Hessian ${H_{uu}-H_{uc}H_{cc}^{-1}H_{cu}}$, the Schur complement of ${H_{cc}}$ in ${H}$.
\end{enumerate}
\end{theorem}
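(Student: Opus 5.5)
The proof reduces everything to Theorem~\ref{thm:pd} together with partial minimization of a quadratic. Under (H1), Theorem~\ref{thm:pd}(i) applied to each program, with input $(u_i,c_i)$, shows that $V_i$ is a quadratic function, finite everywhere. Under (H2), Theorem~\ref{thm:pd}(iii) gives that $V_i$ is jointly convex in $(u_i,c_i)$. Hence the function
\[
F(u,c_1):=V_1(u_1,c_1)+V_2(u_2,c_{\mathrm{s}}-c_1)
\]
is a convex quadratic with Hessian $H$. By (H3), $F(u,\cdot)$ is strictly convex for each fixed $u$, so it has a unique minimizer. That minimizer is characterized by $\nabla_{c_1}F=0$, which reads $\nabla_{c_1}V_1(u_1,c_1)-\nabla_{c_2}V_2(u_2,c_2)=0$ with $c_2=c_{\mathrm{s}}-c_1$. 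This is precisely~\eqref{eq:portgen}. This settles the attainment and uniqueness part of (i).

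To identify $V_{12}$ with the infimal convolution, I will split the minimization of the interconnected program into two stages. First, for fixed $(c_1,c_2)$ with $c_1+c_2=c_{\mathrm{s}}$, the objective and the constraints separate across the two programs. The inner infimum over $(x_1,x_2)$ is therefore $V_1(u_1,c_1)+V_2(u_2,c_2)$. Each term is finite and attained at a unique $x_i^\star(u_i,c_i)$ because $K_i$ is nonsingular (Lemma~\ref{lem:affine}). Taking the outer infimum over $c_1+c_2=c_{\mathrm{s}}$ gives $V_{12}=V_1\,\Box\,V_2$. The interchange of infima is just associativity of iterated infima, so no convexity is needed for this step.

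Part (ii) follows from the same two-stage decomposition. A tuple $(x_1,x_2,c_1,c_2)$ attains the joint infimum if and only if $(c_1,c_2)$ attains the outer infimum and each $x_i$ attains the inner infimum at $c_i$. The forward direction holds because any joint minimizer restricted to fixed $c$ must minimize the inner problem, and its value must equal the outer minimum. The converse holds because the values add up to $V_{12}$. By (i), "$(c_1,c_2)$ attains the outer infimum" is equivalent to~\eqref{eq:portgen}.

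For (iii), I partially minimize the quadratic $F(u,c_1)=\tfrac12 (u,c_1)\tp H (u,c_1)+\text{affine}$ over $c_1$. Since $H_{cc}\succ0$, the minimizer is $c_1=-H_{cc}^{-1}(H_{cu}u+h_c)$, which is affine in $u$. Substituting back gives a quadratic in $u$ whose Hessian is $H_{uu}-H_{uc}H_{cc}^{-1}H_{cu}$, by the standard completion-of-squares computation.

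The only delicate point is making sure the inner problems are finite and attained for every $c_i$. Otherwise the infimal convolution could involve $\pm\infty$ values. This is exactly what nonsingularity of $K_i$ provides, for all inputs, via Lemma~\ref{lem:affine} and Theorem~\ref{thm:pd}(i). Convexity from (H2) is not strictly needed: (H3) alone ensures that the stationary point in $c_1$ is the unique minimizer. I will remark that (H2) guarantees the consistency of signs that makes (H3) natural.
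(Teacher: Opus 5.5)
Your proposal is correct and follows essentially the same route as the paper: iterated minimization (first over $(x_1,x_2)$, then over $c_1+c_2=c_{\mathrm{s}}$) gives $V_{12}=V_1\,\Box\,V_2$, and the first-order condition of the strictly convex quadratic $c_1\mapsto V_1(u_1,c_1)+V_2(u_2,c_{\mathrm{s}}-c_1)$ yields attainment and \eqref{eq:portgen}. Part (ii) uses the same attainment-of-both-infima argument, and part (iii) uses the same partial minimization of a quadratic to obtain the Schur complement; you are also right that (H2) is not needed, and the paper's proof does not use it.
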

\noindent
Interconnecting three or more programs amounts to repeated Schur complementation, whose result does not depend on the order of elimination whenever the intermediate pivots are nonsingular, by the quotient property of Schur complements~\cite[Ch.~1]{zhang2005schur}. In the language of circuit theory, Kron reduction may be performed one interior node at a time~\cite{DorflerBullo2013}.

\begin{example}[Series law]\label{ex:series}
Consider two edges of resistances ${r_1,r_2>0}$ connected in series, with end potentials ${v_{\mathrm L},v_{\mathrm R}\in\R}$, shared potential ${v\in\R}$, and ${\Pi:=\big[\begin{smallmatrix*}[r]1&-1\\-1&1\end{smallmatrix*}\big]}$.
With the potentials as inputs, the value function of an edge of resistance ${r}$ is ${W_r(v_1,v_2)=-(v_1-v_2)^2/(2r)}$, and its gradient ${-\tfrac1r\Pi(v_1,v_2)}$ is the boundary relation~\eqref{eq:boundary_relation} of the edge.
At the shared node, the interconnection~\eqref{eq:port_interconnection} requires the currents of the two edges to sum to zero, that is, ${\partial_v\big(W_{r_1}(v_{\mathrm L},v)+W_{r_2}(v,v_{\mathrm R})\big)=0}$.
The function in brackets is strictly concave in ${v}$ and is maximized at ${v^\star=(r_2v_{\mathrm L}+r_1v_{\mathrm R})/(r_1+r_2)}$, where it equals ${-(v_{\mathrm L}-v_{\mathrm R})^2/(2(r_1+r_2))}$.
The gradient of this maximum with respect to ${(v_{\mathrm L},v_{\mathrm R})}$ is ${c_{\mathrm e}=-\tfrac{1}{r_1+r_2}\Pi\,v_{\mathrm e}}$, the series law.
\end{example}

\section{Value description}\label{sec:prob}

The primal--dual behavior~\eqref{eq:pdbehavior} captures the gradient of the value function, and hence determines ${V}$ only up to an additive constant.
The description of this section augments it with the optimal value, and stands to Section~\ref{sec:pd} as contact geometry stands to symplectic geometry.

\subsection{Entropic regularization and the free energy}\label{ssec:lift}

Consider an open optimization program~\eqref{eq:open}, with objective ${f}$, and, for ${T>0}$, replace the minimization of ${f}$ by the minimization of ${\E_p[f]+T\,\E_p[\log p]}$ over probability densities ${p}$ on the feasible set.
This is the \emph{entropic regularization} of the program~\cite{WainwrightJordan2008TR}.
Its minimizer is proportional to ${\exp(-f/T)}$ and its minimum is the free energy ${-T\log\int\exp(-f/T)}$ of statistical mechanics~\cite{WainwrightJordan2008TR}, which equals the log-partition function of an exponential family up to sign and scale~\cite{WainwrightJordan2008TR}.
For open \glspl{QP} with nonsingular \gls{KKT} matrices, the mean of the minimizing density is the minimizer of the program for every ${T>0}$, under some assumptions, as we now discuss.

Consider the open \gls{QP}~\eqref{eq:oqp}, with fixed ${u\in\R^m}$ and ${b\in\R^p}$, and write ${V(u,b)}$ and ${x^\star(u,b)}$ to make the dependence on ${b}$ explicit.
Let ${d:=\dim\ker A}$ and let the columns of ${Z}$ form an orthonormal basis of ${\ker A}$, so that ${\widehat P:=Z\tp PZ}$ is the reduced Hessian of~\eqref{eq:oqp}.
For ${x_0\in\mathcal X(u)}$, the integral of a function ${g}$ over the affine subspace ${\mathcal{X}(u)}$ is defined as ${\int_{\mathcal X(u)}g(x)\,dx:=\int_{\R^d}g(x_0+Z\eta)\,d\eta}$, which does not depend on ${x_0}$ and ${Z}$, since two such choices differ by a translation and an orthogonal change of coordinates of ${\R^d}$.
For ${T>0}$ and ${f}$ the objective of~\eqref{eq:oqp}, define the \emph{free energy}
\begin{equation}\label{eq:freeenergy}
\mathcal{F}_T(u,b):=-T\log\int_{\mathcal{X}(u)}\exp\big(-f(x,u)/T\big)\,dx ,
\end{equation}
and, when the integral in~\eqref{eq:freeenergy} is finite, let ${p_T}$ be the probability density on ${\mathcal X(u)}$ proportional to ${\exp(-f(x,u)/T)}$.

\begin{proposition}[Free energy]\label{prop:lift}
Consider the open \gls{QP}~\eqref{eq:oqp}, the free energy~\eqref{eq:freeenergy}, and the density ${p_T}$.
Assume ${P\succeq0}$ and ${K}$ is nonsingular.
Then, for every ${T>0}$, the following statements hold.
\begin{enumerate}
\item[(i)] The integral in~\eqref{eq:freeenergy} is finite, and ${p_T}$ is the Gaussian density with mean ${x^\star(u,b)}$, the minimizer of~\eqref{eq:oqp}, and covariance ${T\,Z\widehat P^{-1}Z\tp}$.
\item[(ii)] The free energy satisfies
\[
\mathcal{F}_T(u,b)=V(u,b)+\tfrac{T}{2}\log\det\big(\tfrac{1}{2\pi T}\widehat P\big).
\]
\item[(iii)] ${\nabla_b\mathcal{F}_T=\nabla_bV=-\lambda^\star}$ and ${\nabla_u\mathcal{F}_T=\nabla_uV=y}$.
\end{enumerate}
\end{proposition}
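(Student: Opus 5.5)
The plan is to reduce the integral to a Gaussian integral over the reduced coordinates ${\eta\in\R^d}$ and read off all three statements from the completed square.

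First I will show that ${\widehat P\succ0}$. Nonsingularity of ${K}$ forces ${A}$ to have full row rank. It also forces ${\ker P\cap\ker A=\{0\}}$: if ${Pz=0}$ and ${Az=0}$, then ${K(z,0)=0}$. With ${P\succeq0}$, this gives ${\eta\tp\widehat P\eta=(Z\eta)\tp P(Z\eta)>0}$ for ${\eta\neq0}$.

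Next I parametrize ${\mathcal X(u)}$ as ${x=x^\star+Z\eta}$, using the base point ${x_0=x^\star(u,b)}$, which is feasible. Nonemptiness of ${\mathcal X(u)}$ follows from the full row rank of ${A}$. The KKT conditions~\eqref{eq:kkt} give ${Px^\star+q+Nu=-A\tp\lambda^\star}$, and ${AZ=0}$, so the linear term in ${\eta}$ vanishes:
\[
f(x^\star+Z\eta,u)=V(u,b)+\tfrac12\eta\tp\widehat P\eta .
\]
Hence
\[
\int_{\mathcal X(u)}e^{-f/T}\,dx=e^{-V/T}\int_{\R^d}e^{-\eta\tp\widehat P\eta/(2T)}\,d\eta
=e^{-V/T}\,(2\pi T)^{d/2}\det(\widehat P)^{-1/2},
\]
which is finite. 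The density ${p_T}$ is therefore the pushforward under ${\eta\mapsto x^\star+Z\eta}$ of ${\mathcal N(0,T\widehat P^{-1})}$. This is a Gaussian on the affine set with mean ${x^\star}$ and covariance ${TZ\widehat P^{-1}Z\tp}$, which proves (i).

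Taking ${-T\log}$ of the integral gives
\[
\mathcal F_T=V+\tfrac T2\log\det\widehat P-\tfrac{Td}{2}\log(2\pi T)=V+\tfrac T2\log\det\big(\tfrac1{2\pi T}\widehat P\big),
\]
which proves (ii). The case ${d=0}$ is consistent: the integral is the point evaluation at ${x^\star}$, so ${\mathcal F_T=V}$.

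For (iii), the correction term depends only on ${P}$, ${A}$ and ${T}$, not on ${(u,b)}$, so ${\nabla\mathcal F_T=\nabla V}$. The identity ${\nabla_uV=y}$ is Theorem~\ref{thm:pd}(i). For ${\nabla_bV}$, I will use the envelope argument from that proof. Write ${V}$ as the Lagrangian evaluated at the KKT point and differentiate; the stationarity and feasibility terms cancel. The term ${\lambda\tp(Ax+Mu-b)}$ then contributes ${-\lambda^\star}$. Equivalently, ${V}$ is the quadratic obtained by substituting ${K^{-1}}$ of the right-hand side, and its derivative in ${b}$ follows by direct differentiation.

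The main obstacle is the bookkeeping in the change of variables: the claim that ${\widehat P}$ is positive definite, and the careful normalization so that the determinant appears as ${\det(\widehat P/(2\pi T))}$. The remaining steps are routine.
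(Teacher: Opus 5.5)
Your proposal is correct and follows the paper's proof essentially step by step. Both parametrize $\mathcal X(u)=x^\star+\ker A$, use the stationarity condition to remove the linear term so that $f(x^\star+Z\eta,u)=V(u,b)+\tfrac12\eta\tp\widehat P\eta$, evaluate the Gaussian integral, take $-T\log$, and observe that the correction term does not depend on $(u,b)$. The only differences are cosmetic: you prove $\widehat P\succ0$ directly from $\ker P\cap\ker A=\{0\}$ where the paper cites it, and you obtain $\nabla_bV=-\lambda^\star$ by an envelope argument on the Lagrangian, whereas the paper differentiates $Ax^\star+Mu=b$ in $b$ and uses $\nabla_xf(x^\star,u)=-A\tp\lambda^\star$.
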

\noindent
Proposition~\ref{prop:lift} states that the regularized density of an open \gls{QP} is Gaussian and centered at the minimizer for every ${T}$, and that the free energy differs from the value function by a term that depends on the data only through the reduced Hessian, so that the two functions have the same gradients.

\subsection{The value behavior}\label{ssec:valuebehavior}

By Proposition~\ref{prop:lift}, the free energy carries the value of an open \gls{QP} together with its conjugate variable, which the primal--dual behavior alone does not.
Let ${\W^{\mathrm{v}}:=\R^m\times\R^m\times\R}$.
In analogy with~\eqref{eq:pdbehavior}, we associate with~\eqref{eq:oqp} the \emph{value behavior}
\begin{equation}\label{eq:jet}
\B^{\mathrm{v}}:=\{\,(u,y,\phi)\in\W^{\mathrm{v}}\mid y=\nabla V(u),\ \phi=V(u)\,\}.
\end{equation}
The \emph{standard contact form} on ${\W^{\mathrm{v}}}$ is ${\vartheta:=d\phi-y\tp du}$, and a submanifold of dimension ${m}$ on which ${\vartheta}$ vanishes is \emph{Legendrian}~\cite{arnold1989}.
Up to the ordering of the coordinates, ${\B^{\mathrm{v}}}$ is the image of the ${1}$-jet extension ${u\mapsto(u,V(u),\nabla V(u))}$ of ${V}$, hence a Legendrian submanifold of ${(\W^{\mathrm{v}},\vartheta)}$~\cite{arnold1989,Limouzineau2016}.
In the notation of Section~\ref{sec:apps}, with ${\theta}$ in place of ${u}$, the value behavior of a potential ${\Phi}$ is the set of triples ${(\theta,\nabla\Phi(\theta),\Phi(\theta))}$.
Accordingly, the interconnection of two value behaviors identifies the inputs, adds the conjugate variables, and adds the values,
\begin{equation}\label{eq:valadd}
u_1=u_2,
\qquad
y=y_1+y_2,
\qquad
\phi=\phi_1+\phi_2,
\end{equation}
the last being the rule absent from Section~\ref{sec:pd}.

The free energies of Section~\ref{sec:apps} are defined as follows.
Let ${z\in\R^{\ell}}$ be a latent variable and let ${\psi_1,\psi_2:\R^{\ell}\to\R_{>0}}$ be measurable functions, called \emph{factors}, each associated with one module and modeling the prior and likelihood terms known to it.
Let ${Z_{12}:=\int_{\R^\ell}\psi_1(z)\psi_2(z)\,dz}$ and, when ${Z_{12}}$ is positive and finite, let ${p_{12}:=\psi_1\psi_2/Z_{12}}$ be the normalized product of the factors.
When ${\psi_1\psi_2}$ is the joint density of the data and of the latent variable, ${Z_{12}}$ is the evidence of the model~\cite{Blei2017}.
Let ${\mu}$ be a probability density on ${\R^{\ell}}$, called a \emph{belief}, let ${\Ent(\mu):=-\E_\mu[\log\mu]}$ be its differential entropy~\cite{CoverThomas2006}, and let ${\mathcal{F}_i:=\E_\mu[\log \mu-\log\psi_i]}$, ${i=1,2}$, and ${\mathcal{F}_{12}:=\E_\mu[\log \mu-\log(\psi_1\psi_2)]}$ be the variational free energies of the factors and of their product~\cite{WainwrightJordan2008TR,Blei2017}.
Whenever these expectations are finite and ${Z_{12}}$ is positive and finite, one has ${\mathcal F_{12}=\KL(\mu\,\|\,p_{12})-\log Z_{12}}$, so that the minimum of ${\mathcal F_{12}}$ over any family of beliefs containing ${p_{12}}$ is ${-\log Z_{12}}$, the variational characterization of the evidence~\cite{Blei2017}.

Finally, consider the program~\eqref{eq:joint} in which ${\xi}$ affects the first program through the objective, that is, ${M_1=0}$.
Let ${Z_1}$ and ${Z_2}$ have orthonormal columns spanning ${\ker A_1}$ and ${\ker A_2}$, and let ${\widehat P_i:=Z_i\tp P_iZ_i}$ and ${\widehat L:=Z_1\tp N_1Z_2}$.
The columns of ${\diag(Z_1,Z_2)}$ span the kernel of the constraint matrix ${\diag(A_1,A_2)}$ of~\eqref{eq:joint}, whose reduced Hessian is therefore ${\widehat P_{12}:=\big[\begin{smallmatrix}\widehat P_1&\widehat L\\ \widehat L\tp&\widehat P_2\end{smallmatrix}\big]}$.

\begin{theorem}[Composition of values]\label{thm:value}
Consider two open \glspl{QP} with matrices ${P_i}$ and ${K_i}$ of the form~\eqref{eq:oqp} and~\eqref{eq:kkt}, value functions ${V_i}$, and value behaviors ${\B_i^{\mathrm{v}}}$, ${i=1,2}$, the factors ${\psi_1,\psi_2}$, the belief ${\mu}$, the free energies ${\mathcal{F}_1,\mathcal{F}_2,\mathcal{F}_{12}}$, and the program~\eqref{eq:joint} with ${\xi}$ affecting the first program through the objective and the matrices ${\widehat P_{12}}$, ${\widehat P_1}$, ${\widehat P_2}$, ${\widehat L}$ defined above.
Assume ${P_i\succeq0}$ and ${K_i}$ nonsingular for ${i=1,2}$, ${Z_{12}}$ positive and finite, and ${\E_\mu[\,|\log\mu|+|\log\psi_1|+|\log\psi_2|\,]<\infty}$.
Then the following statements hold.
\begin{enumerate}
\item[(i)] Adding constants ${\kappa_1,\kappa_2\in\R}$ to the objectives of the two programs leaves ${\B_1^{\mathrm{pd}}}$ and ${\B_2^{\mathrm{pd}}}$ unchanged and adds ${\kappa_1+\kappa_2}$ to the value of any program minimizing the sum of the two objectives over a fixed feasible set.
\item[(ii)] The interconnection~\eqref{eq:valadd} maps ${(\B_1^{\mathrm{v}},\B_2^{\mathrm{v}})}$ to the value behavior of ${V_1+V_2}$.
\item[(iii)] If ${\widehat P_{12}\succ0}$, then
\[
\log\det\widehat P_{12}=\log\det\widehat P_1+\log\det(\widehat P_2-\widehat L\tp\widehat P_1^{-1}\widehat L).
\]
\item[(iv)] ${\mathcal{F}_1+\mathcal{F}_2=\mathcal{F}_{12}-\Ent(\mu)}$.
\end{enumerate}
\end{theorem}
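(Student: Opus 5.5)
The four statements are essentially independent, so we prove them one at a time, each by a short direct argument.

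\emph{Statement (i).} Adding a constant $\kappa_i$ to the $i$-th objective leaves its gradient and Hessian unchanged. Hence the KKT system~\eqref{eq:kkt} and its solution $(x^\star(u),\lambda^\star(u))$ are unchanged, and so is the conjugate variable~\eqref{eq:rho}. By~\eqref{eq:pdbehavior}, $\B_i^{\mathrm{pd}}$ is therefore unchanged; equivalently, by Theorem~\ref{thm:pd}(i), $V_i$ shifts by $\kappa_i$ and $\nabla V_i$ does not. For a program minimizing $f_1+f_2$ over a fixed feasible set, the objective becomes $f_1+f_2+\kappa_1+\kappa_2$. The infimum therefore shifts by exactly $\kappa_1+\kappa_2$, because a constant commutes with the infimum.

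\emph{Statement (ii).} Take $(u,y_1,\phi_1)\in\B_1^{\mathrm{v}}$ and $(u,y_2,\phi_2)\in\B_2^{\mathrm{v}}$ with a common input, as required by $u_1=u_2$. By~\eqref{eq:jet} we have $y_i=\nabla V_i(u)$ and $\phi_i=V_i(u)$. The rule~\eqref{eq:valadd} then produces $(u,\nabla V_1(u)+\nabla V_2(u),V_1(u)+V_2(u))$, which equals $(u,\nabla(V_1+V_2)(u),(V_1+V_2)(u))$ by linearity of the gradient. Every $u$ is reached, so the image is exactly the value behavior of $V_1+V_2$. We may also note that it is Legendrian, since $\vartheta$ pulls back to $d(V_1+V_2)-\nabla(V_1+V_2)\tp du=0$.

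\emph{Statement (iii).} This is the block determinant formula. Write
\[
\widehat P_{12}=\begin{bmatrix}I&0\\ \widehat L\tp\widehat P_1^{-1}&I\end{bmatrix}\begin{bmatrix}\widehat P_1&0\\0&\widehat P_2-\widehat L\tp\widehat P_1^{-1}\widehat L\end{bmatrix}\begin{bmatrix}I&\widehat P_1^{-1}\widehat L\\0&I\end{bmatrix}.
\]
This factorization is valid because $\widehat P_{12}\succ0$ implies that its principal block $\widehat P_1\succ0$ is invertible. The Schur complement is also positive definite, since it is congruent to a diagonal block of a positive definite matrix. Both determinants are therefore positive, and taking $\log\det$ of the product gives the claim. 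The only care needed is this justification of positivity, so that the logarithms are defined.

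\emph{Statement (iv).} Under the integrability assumption, all the expectations are finite, so linearity of expectation applies. We compute
\[
\mathcal F_1+\mathcal F_2=\E_\mu[2\log\mu-\log\psi_1-\log\psi_2]=\E_\mu[\log\mu-\log(\psi_1\psi_2)]+\E_\mu[\log\mu]=\mathcal F_{12}-\Ent(\mu).
\]

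The main obstacle is merely bookkeeping: checking that the integrability hypothesis $\E_\mu[\,|\log\mu|+|\log\psi_1|+|\log\psi_2|\,]<\infty$ justifies splitting these expectations, so that no $\infty-\infty$ terms arise. The hypothesis that $Z_{12}$ is positive and finite is not needed for (iv) itself.
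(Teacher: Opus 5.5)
Your proposal is correct and follows the paper's proof essentially step by step: in (i) the constant drops out of the KKT data and commutes with the infimum, in (ii) the unique points of the two value behaviors above each input are added, in (iii) the Schur determinant formula is applied (you derive it from an explicit block triangular factorization instead of citing it), and in (iv) linearity of expectation is used under the integrability hypothesis. One wording fix in (iii): the Schur complement $\widehat P_2-\widehat L\tp\widehat P_1^{-1}\widehat L$ is positive definite because it is a diagonal block of $\diag(\widehat P_1,\widehat P_2-\widehat L\tp\widehat P_1^{-1}\widehat L)$, which your factorization shows is congruent to $\widehat P_{12}\succ0$; it is not itself congruent to a diagonal block.
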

\noindent
By Theorem~\ref{thm:value}, the value of an interconnection is not determined by the primal--dual behaviors of its modules, and ${\mathcal F_{12}=\mathcal F_1+\mathcal F_2+\Ent(\mu)}$, so adding the two free energies counts the entropy of the shared belief twice.
Equivalently, ${\widetilde{\mathcal F}_1+\mathcal F_2=\mathcal F_{12}}$, with ${\widetilde{\mathcal F}_1:=\mathcal F_1+\Ent(\mu)=\E_\mu[-\log\psi_1]}$, that is, the free energies compose additively once one module transmits the expected negative logarithm of its factor without the entropy of the belief.

\begin{remark}[Gaussian architectures]\label{rem:cca}
Consider~\eqref{eq:joint} with ${A_1,A_2}$ absent, so that ${K_i=P_i}$ and ${L=N_1}$ in~\eqref{eq:block}, and a Gaussian random vector ${(X_1,X_2)}$ with values in ${\R^{n_1}\times\R^{n_2}}$ whose covariance matrix is the inverse of ${\Lambda:=\big[\begin{smallmatrix}P_1&L\\L\tp&P_2\end{smallmatrix}\big]}$, called its precision matrix.
Assume ${\Lambda\succ0}$, and let ${k:=\min\{n_1,n_2\}}$ and ${R:=P_1^{-1/2}LP_2^{-1/2}}$.
Then the singular values ${s_1,\dots,s_k}$ of ${R}$ are the canonical correlations of ${(X_1,X_2)}$~\cite{hotelling1936}, the eigenvalues of ${\Gamma}$ are ${s_1^2,\dots,s_k^2}$ together with ${n_2-k}$ zeros, and the mutual information of ${X_1}$ and ${X_2}$ is ${-\tfrac12\log\det(I-\Gamma)}$~\cite{CoverThomas2006}.
Indeed, the precision matrix of ${(P_1^{1/2}X_1,P_2^{1/2}X_2)}$ is ${\big[\begin{smallmatrix}I&R\\R\tp&I\end{smallmatrix}\big]}$, a singular value decomposition of ${R}$ splits this vector into ${k}$ independent pairs with precision matrices ${\big[\begin{smallmatrix}1&s_i\\s_i&1\end{smallmatrix}\big]}$, hence with correlation ${-s_i}$, and into independent remaining coordinates, and ${\Gamma=P_2^{-1/2}R\tp RP_2^{1/2}}$.
By Lemma~\ref{lem:error}, ${\xi^{(1)}=\xi^\star}$ for all ${q_1}$ and ${q_2}$ if and only if ${L=0}$, that is, if and only if ${X_1}$ and ${X_2}$ are independent.
\end{remark}

\section{Conclusions}\label{sec:concl}

\begin{table}[t]
\centering
\caption{Descriptions of an open \gls{QP}.}
\label{tab:summary}
\renewcommand{\arraystretch}{1.25}
\footnotesize
\setlength{\tabcolsep}{4pt}
\begin{tabular}{@{}lccc@{}}
\hline
& \textbf{primal} & \textbf{primal--dual} & \textbf{value}\\
\hline
transmits & ${x^\star}$ & ${(u,\nabla V)}$ & ${(u,\nabla V,V)}$\\
behavior & affine & Lagrangian & Legendrian\\
interconnection & \eqref{eq:interconnected} & \eqref{eq:portgen} & \eqref{eq:portgen}, \eqref{eq:valadd}\\
application & control stack & resistive networks & belief networks\\
\hline
\end{tabular}
\end{table}

We have studied open quadratic optimization programs as systems interconnected by variable sharing. The analysis characterizes well-posedness and, for modules interconnected by adding their objectives, the convergence of sequential module evaluation, and develops interconnection rules for primal, primal--dual, and value behaviors. These descriptions distinguish the information needed to compose optimizers, conjugate variables, and optimal values, as illustrated by control architectures, resistive networks, and variational inference. Extending the primal--dual description to subdifferential relations of convex value functions and exploring compositional approaches based on category theory are natural directions for future work.

\appendix

\begin{proof}[Proof of Lemma~\ref{lem:affine}]
Since ${P\succeq0}$ and the constraints are affine, the \gls{KKT} conditions~\eqref{eq:kkt} are necessary and sufficient for optimality~\cite[Sec.~10.1.1]{BoydVandenberghe2004}. Since ${K}$ is nonsingular, for every ${u\in\R^m}$ the system~\eqref{eq:kkt} has the unique solution
\[
\begin{bmatrix}x^\star(u)\\\lambda^\star(u)\end{bmatrix}
=K^{-1}\begin{bmatrix}-q\\b\end{bmatrix}-K^{-1}\begin{bmatrix}N\\M\end{bmatrix}u ,
\]
so that ${X^\star(u)=\{x^\star(u)\}}$ and ${x^\star}$ is an affine function of ${u}$. Hence, ${\B=\graph X^\star}$ is the image of the injective affine map ${u\mapsto(u,x^\star(u))}$, that is, an affine subspace of ${\R^m\times\R^n}$ of dimension ${m}$.
\end{proof}

\begin{proof}[Proof of Proposition~\ref{thm:wp}]
By~\eqref{eq:stacked}, ${\B=\{w\in\W\mid Dw=d\}}$, that is, ${w\in\B}$ if and only if
\begin{equation}\label{eq:app_split}
D_{\mathrm{out}}w_{\mathrm{out}}=d-D_{\mathrm{in}}w_{\mathrm{in}} .
\end{equation}
(i) By Lemma~\ref{lem:affine}, the behavior of an open quadratic optimization program with input ${w_{\mathrm{in}}}$, decision variable ${w_{\mathrm{out}}}$, and nonsingular \gls{KKT} matrix is the graph of an affine map defined on the whole input space.
Suppose first that ${\B}$ is such a graph. Then ${\B\neq\emptyset}$, and~\eqref{eq:app_split} has a unique solution ${w_{\mathrm{out}}}$ for every ${w_{\mathrm{in}}}$. Uniqueness gives ${\ker D_{\mathrm{out}}=\{0\}}$, and existence gives
\[
d-D_{\mathrm{in}}w_{\mathrm{in}}\in\im D_{\mathrm{out}}\quad\text{for every }w_{\mathrm{in}},
\]
in which case ${d\in\im D_{\mathrm{out}}}$. Then ${\im D_{\mathrm{in}}\subseteq\im D_{\mathrm{out}}}$.
Conversely, assume ${\B\neq\emptyset}$, ${\ker D_{\mathrm{out}}=\{0\}}$, and ${\im D_{\mathrm{in}}\subseteq\im D_{\mathrm{out}}}$. For any ${w\in\B}$,
\[
d=D_{\mathrm{in}}w_{\mathrm{in}}+D_{\mathrm{out}}w_{\mathrm{out}}\in\im D_{\mathrm{out}},
\]
so that~\eqref{eq:app_split} is consistent for every ${w_{\mathrm{in}}}$ and has the unique solution
\[
w_{\mathrm{out}}=(D_{\mathrm{out}}\tp D_{\mathrm{out}})^{-1}D_{\mathrm{out}}\tp(d-D_{\mathrm{in}}w_{\mathrm{in}}) .
\]
This is also the unique minimizer of the least-squares program
\[
\min_{w_{\mathrm{out}}}\ \tfrac12\|D_{\mathrm{out}}w_{\mathrm{out}}+D_{\mathrm{in}}w_{\mathrm{in}}-d\|^2 ,
\]
which, up to a term independent of ${w_{\mathrm{out}}}$, is the open quadratic optimization program~\eqref{eq:oqp} with ${P=D_{\mathrm{out}}\tp D_{\mathrm{out}}}$, ${N=D_{\mathrm{out}}\tp D_{\mathrm{in}}}$, ${q=-D_{\mathrm{out}}\tp d}$, input ${w_{\mathrm{in}}}$, and no constraints. Hence, ${\B}$ is the behavior of this program, whose \gls{KKT} matrix ${D_{\mathrm{out}}\tp D_{\mathrm{out}}}$ is nonsingular.

(ii) If ${Dw=d}$ is consistent, then ${\B=w_0+\ker D}$ for any ${w_0\in\B}$, which is a singleton if and only if ${\ker D=\{0\}}$, that is, if and only if ${D}$ has full column rank.
If ${Dw=d}$ is inconsistent, then ${\B=\emptyset}$.

(iii) By definition, ${\B=\{\,Ez+e\mid z=\Xi z+b_\star\,\}}$, and ${z\mapsto Ez+e}$ is a bijection from the solution set of ${(I-\Xi)z=b_\star}$ onto ${\B}$. A square linear system has exactly one solution if and only if the matrix defining the system is nonsingular. Hence, ${\B}$ is a singleton if and only if ${1\notin\spec(\Xi)}$, in which case ${w_\star=(I-\Xi)^{-1}b_\star}$ is the unique solution of the loop equation and ${w=Ew_\star+e}$, with ${w_\star=R_\star w}$.
\end{proof}

\begin{proof}[Proof of Proposition~\ref{prop:gs}]
(i) Since ${K_1}$ is nonsingular, block elimination gives
\[
\begin{bmatrix}I&0\\-L\tp K_1^{-1}&I\end{bmatrix}
\begin{bmatrix}K_1&L\\L\tp&K_2\end{bmatrix}
=\begin{bmatrix}K_1&L\\0&K_2-L\tp K_1^{-1}L\end{bmatrix},
\]
and ${K_2-L\tp K_1^{-1}L=K_2(I-\Gamma)}$ by~\eqref{eq:gamma}. The left factor is unit lower triangular, so the matrix in~\eqref{eq:block} is nonsingular if and only if its Schur complement ${K_2(I-\Gamma)}$ is~\cite[Ch.~1]{zhang2005schur}, that is, ${K_2}$ being nonsingular, if and only if ${1\notin\spec(\Gamma)}$.

(ii) By~\eqref{eq:gamma}, ${\Gamma=XY}$ and ${\widetilde\Gamma=YX}$ with ${X:=K_2^{-1}L\tp}$ and ${Y:=K_1^{-1}L}$. If ${XYv=\theta v}$ with ${v\neq0}$ and ${\theta\neq0}$, then ${Yv\neq0}$ and ${YX(Yv)=\theta\,Yv}$, and likewise with ${X}$ and ${Y}$ exchanged. Thus, ${XY}$ and ${YX}$ have the same nonzero eigenvalues, and ${\rho(\Gamma)=\rho(\widetilde\Gamma)}$.

(iii) By (i), the system~\eqref{eq:block} has a unique solution ${w^\star}$, and its two block rows read
\[
w_2^\star=K_2^{-1}\big(r_2-L\tp w_1^\star\big),
\qquad
w_1^\star=K_1^{-1}\big(r_1-Lw_2^\star\big).
\]
Subtracting these identities from~\eqref{eq:iter} gives
\[
\begin{aligned}
w_2^{(j+1)}-w_2^\star&=-K_2^{-1}L\tp\big(w_1^{(j)}-w_1^\star\big),\\
w_1^{(j+1)}-w_1^\star&=-K_1^{-1}L\big(w_2^{(j+1)}-w_2^\star\big),
\end{aligned}
\]
and substituting the first identity into the second gives ${w_1^{(j+1)}-w_1^\star=\widetilde\Gamma\,(w_1^{(j)}-w_1^\star)}$.
Let ${\varepsilon_j:=w_1^{(j)}-w_1^\star}$, so that ${\varepsilon_j=\widetilde\Gamma^{\,j}\varepsilon_0}$. Then ${\varepsilon_j\to0}$ for every ${\varepsilon_0}$ if and only if ${\widetilde\Gamma^{\,j}\to0}$, that is, if and only if ${\rho(\widetilde\Gamma)<1}$~\cite{Varga2000}, and ${\rho(\widetilde\Gamma)=\rho(\Gamma)}$ by (ii). The convergence of ${w_2^{(j)}}$ then follows from the first recursion. Finally, by the definition of the induced norm,
\[
\|\varepsilon_j\|\le\|\widetilde\Gamma^{\,j}\|\,\|\varepsilon_0\| ,
\]
and, for every induced matrix norm~\cite{Varga2000},
\[
\lim_{j\to\infty}\|\widetilde\Gamma^{\,j}\|^{1/j}=\rho(\widetilde\Gamma)=\rho(\Gamma) .
\]
\end{proof}

\begin{proof}[Proof of Lemma~\ref{lem:error}]
Since ${K_2}$ is nonsingular, ${A_2}$ has full row rank, because ${(0,v)\in\ker K_2}$ for every ${v\in\ker A_2\tp}$, and ${P_2}$ is positive definite on ${\ker A_2}$~\cite[Sec.~10.1.1]{BoydVandenberghe2004}, that is, ${\widehat P_2\succ0}$.
By~\eqref{eq:iter} with ${w_1^{(0)}=0}$ and by the second block row of~\eqref{eq:block}, ${w_2^{(1)}=K_2^{-1}r_2}$ and ${w_2^\star=K_2^{-1}(r_2-L\tp w_1^\star)}$, so that, by~\eqref{eq:CZ},
\begin{equation}\label{eq:app_w2}
w_2^{(1)}-w_2^\star=K_2^{-1}L\tp w_1^\star=K_2^{-1}\begin{bmatrix}C\tp w_1^\star\\0\end{bmatrix}.
\end{equation}
We compute the primal component of the right-hand side by the null-space method~\cite[Ch.~16]{NocedalWright2006}. For ${\zeta\in\R^{n_2}}$, let ${(\delta\xi,\delta\lambda)}$ be the solution of ${K_2(\delta\xi,\delta\lambda)=(\zeta,0)}$, that is,
\[
P_2\delta\xi+A_2\tp\delta\lambda=\zeta,
\qquad
A_2\delta\xi=0 .
\]
The second equation gives ${\delta\xi=Z\eta}$ for some ${\eta}$, and multiplying the first by ${Z\tp}$ gives, since ${Z\tp A_2\tp=(A_2Z)\tp=0}$,
\[
\widehat P_2\eta=Z\tp\zeta,
\  \implies \ 
\delta\xi=Z\widehat P_2^{-1}Z\tp\zeta .
\]
Then~\eqref{eq:app_w2} with ${\zeta=C\tp w_1^\star}$ gives
\[
\xi^{(1)}-\xi^\star=Z\widehat P_2^{-1}Z\tp C\tp w_1^\star .
\]
Finally, the first block row of~\eqref{eq:block} reads
\[
K_1w_1^\star=r_1-C\xi^\star ,
\]
which is the \gls{KKT} system~\eqref{eq:kkt} of the first program at the input ${\xi^\star}$. By Theorem~\ref{thm:pd},
\[
C\tp w_1^\star=N_1\tp x^\star+M_1\tp\lambda_1^\star=\nabla V_1(\xi^\star) ,
\]
which proves~\eqref{eq:error}.
To establish the last claim, let ${S_1:=K_1-LK_2^{-1}L\tp}$. Eliminating ${w_2}$ from~\eqref{eq:block} gives
\[
S_1w_1^\star=r_1-LK_2^{-1}r_2 ,
\]
and ${S_1}$ is nonsingular, since the determinant of the matrix in~\eqref{eq:block} equals ${\det K_2\det S_1}$~\cite[Ch.~1]{zhang2005schur} and this matrix is nonsingular by Proposition~\ref{prop:gs}. Hence, ${w_1^\star=S_1^{-1}(r_1-LK_2^{-1}r_2)}$ ranges over ${\R^{n_1+p_1}}$ as ${r_1=(-q_1,b_1)}$ varies. Since ${Z\widehat P_2^{-1}}$ has full column rank, ${\xi^{(1)}-\xi^\star=Z\widehat P_2^{-1}(CZ)\tp w_1^\star}$ vanishes for all data if and only if ${CZ=0}$, that is, ${\ker A_2\subseteq\ker N_1\cap\ker M_1}$.
\end{proof}

\begin{proof}[Proof of Theorem~\ref{thm:pd}]
(i) Let ${w:=(x,\lambda)}$ and
\[
d(u):=\begin{bmatrix}q+Nu\\Mu-b\end{bmatrix}=d(0)+Cu ,
\]
so that~\eqref{eq:kkt} reads ${Kw=-d(u)}$, and the Lagrangian of~\eqref{eq:oqp} reads
\[
\begin{aligned}
\mathcal L(x,\lambda,u)&=f(x,u)+\lambda\tp(Ax+Mu-b)\\
&=\tfrac12w\tp Kw+d(u)\tp w .
\end{aligned}
\]
By Lemma~\ref{lem:affine}, ${w^\star:=(x^\star(u),\lambda^\star(u))=-K^{-1}d(u)}$. Since ${Ax^\star(u)+Mu=b}$, the value function is
\[
\begin{aligned}
V(u)&=\mathcal L(w^\star,u)\\
&=\tfrac12w^{\star\top}Kw^\star+d(u)\tp w^\star\\
&=-\tfrac12d(u)\tp K^{-1}d(u),
\end{aligned}
\]
a quadratic function of ${u}$. Since ${K^{-1}}$ is symmetric and ${\partial_ud=C}$, differentiating gives
\[
\begin{aligned}
\nabla V(u)&=-C\tp K^{-1}d(u)\\ &=C\tp w^\star\\
&=N\tp x^\star(u)+M\tp\lambda^\star(u)\\ &=y,\\
\nabla^2V&=-C\tp K^{-1}C .
\end{aligned}
\]
Hence, ${\B^{\mathrm{pd}}}$ is the graph of ${\nabla V}$, and ${y=Su+s}$ with ${S=\nabla^2V}$ and ${s:=-C\tp K^{-1}d(0)}$.

(ii) Let $${\mathcal G:=\{(u,Su)\mid u\in\R^m\}}$$ be the direction space of ${\B^{\mathrm{pd}}}$. Then ${\mathcal G}$ has dimension ${m}$ and, since ${S}$ is symmetric,
\[
\begin{aligned}
\omega\big((u_1,Su_1),(u_2,Su_2)\big)&=(Su_2)\tp u_1-(Su_1)\tp u_2\\
&=u_1\tp(S-S\tp)u_2=0 .
\end{aligned}
\]
Thus, ${\mathcal G}$ is Lagrangian, as is the graph of every symmetric linear map~\cite{arnold1989}.
(iii) If ${N=0}$, the function equal to ${\tfrac12x\tp Px+q\tp x}$ on the affine set ${\{(x,u)\mid Ax+Mu=b\}}$ and to ${+\infty}$ elsewhere is jointly convex in ${(x,u)}$, and ${V}$ is its partial minimization over ${x}$, which preserves convexity~\cite[Sec.~3.2.5]{BoydVandenberghe2004}. Hence, ${V}$ is convex and ${S\succeq0}$.
If ${M=0}$, the feasible set ${\mathcal X}$ does not depend on ${u}$, so that ${V}$ is the pointwise infimum over ${x\in\mathcal X}$ of the functions
\[
u\mapsto\tfrac12x\tp Px+q\tp x+u\tp N\tp x ,
\]
which are affine in ${u}$. Hence, ${V}$ is concave~\cite[Sec.~3.2.3]{BoydVandenberghe2004} and ${S\preceq0}$.
To establish the last claim, consider ${P=I_2}$, ${A=M=\begin{bmatrix}0&1\end{bmatrix}}$, ${N=\diag(1,0)}$, ${q=0}$, and ${b=0}$. Then ${x^\star(u)=-u}$ and ${V(u)=-\tfrac12u_1^2+\tfrac12u_2^2}$, so that ${S=\diag(-1,1)}$ is indefinite.
\end{proof}

\begin{proof}[Proof of Theorem~\ref{thm:port}]
Let ${f_i(x_i,u_i,c_i)}$ and ${\mathcal X_i(u_i,c_i)}$ denote the objective and the feasible set of the ${i}$-th program, ${i=1,2}$, so that ${V_i(u_i,c_i)=\min\{f_i(x_i,u_i,c_i)\mid x_i\in\mathcal X_i(u_i,c_i)\}}$, the minimum being attained by (H1) and Lemma~\ref{lem:affine}.

(i) Minimizing first over ${(x_1,x_2)}$ and then over ${(c_1,c_2)}$ gives
\[
\begin{aligned}
V_{12}(u)&=\inf_{c_1+c_2=c_{\mathrm{s}}}\ \inf_{\substack{x_1\in\mathcal X_1(u_1,c_1)\\ x_2\in\mathcal X_2(u_2,c_2)}}\ \sum_{i=1,2}f_i(x_i,u_i,c_i)\\
&=\inf_{c_1+c_2=c_{\mathrm{s}}}\big\{V_1(u_1,c_1)+V_2(u_2,c_2)\big\}\\
&=(V_1\,\Box\,V_2)(u).
\end{aligned}
\]
By (H1) and Theorem~\ref{thm:pd}, ${V_1}$ and ${V_2}$ are quadratic, so that
\[
g(c_1):=V_1(u_1,c_1)+V_2(u_2,c_{\mathrm{s}}-c_1)
\]
\textit{i.e.}, is a quadratic function of ${c_1}$ with Hessian ${H_{cc}}$, which is positive definite by (H3). Hence, ${g}$ attains its infimum at a unique point, characterized by
\[
\nabla g(c_1)=\nabla_{c_1}V_1(u_1,c_1)-\nabla_{c_2}V_2(u_2,c_{\mathrm{s}}-c_1)=0 ,
\]
which together with ${c_2=c_{\mathrm{s}}-c_1}$ is~\eqref{eq:portgen}.

(ii) For every ${(x_1,x_2,c_1,c_2)}$ feasible for the interconnected program,
\[
\begin{aligned}
f_1(x_1,u_1,c_1)+f_2(x_2,u_2,c_2)&\ge V_1(u_1,c_1)+V_2(u_2,c_2)\\
&\ge V_{12}(u),
\end{aligned}
\]
where the first inequality is an equality if and only if ${x_i}$ is optimal for the ${i}$-th program at ${c_i}$, ${i=1,2}$, and the second if and only if ${(c_1,c_2)}$ satisfies~\eqref{eq:portgen}, by (i). Since ${V_{12}(u)}$ is finite, ${(x_1,x_2,c_1,c_2)}$ is optimal if and only if both inequalities are indeed equalities.

(iii) By (H1) and Theorem~\ref{thm:pd}, the function
\[
(u,c_1)\mapsto V_1(u_1,c_1)+V_2(u_2,c_{\mathrm{s}}-c_1)
\]
is quadratic with Hessian ${H}$, that is, with ${c:=c_1}$, it reads
\[
\tfrac12u\tp H_{uu}u+u\tp H_{uc}c+\tfrac12c\tp H_{cc}c+h_u\tp u+h_c\tp c+h_0
\]
for suitable ${h_u}$, ${h_c}$, and ${h_0}$, and by (i) ${V_{12}(u)}$ is its minimum over ${c}$. By (H3), the minimizer is
\[
c^\star(u)=-H_{cc}^{-1}(H_{cu}u+h_c),
\]
and substituting it gives a quadratic function of ${u}$ with Hessian ${H_{uu}-H_{uc}H_{cc}^{-1}H_{cu}}$, the Schur complement of ${H_{cc}}$ in ${H}$~\cite[App.~A.5.5]{BoydVandenberghe2004}.
\end{proof}

\begin{proof}[Proof of Proposition~\ref{prop:lift}]
As in the proof of Lemma~\ref{lem:error}, nonsingularity of ${K}$ implies that ${A}$ has full row rank, so that ${\mathcal X(u)\neq\emptyset}$, and that ${\widehat P\succ0}$. Let ${x^\star:=x^\star(u,b)}$ and ${\lambda^\star:=\lambda^\star(u,b)}$.
If ${d=0}$, then ${\mathcal X(u)=\{x^\star\}}$, the integral in~\eqref{eq:freeenergy} is the value ${e^{-V(u,b)/T}}$ of the integrand at ${x^\star}$, and ${p_T}$ is the unit mass at ${x^\star}$, so that (i) and (ii) hold with the convention ${\det\widehat P=1}$ for the void matrix ${\widehat P}$.

Next, assume ${d\geq1}$. Since ${\mathcal X(u)=x^\star+\ker A}$, every ${x\in\mathcal X(u)}$ is ${x=x^\star+Z\eta}$ for a unique ${\eta\in\R^d}$. The first block row of~\eqref{eq:kkt} gives ${\nabla_xf(x^\star,u)=-A\tp\lambda^\star}$, so that
\[
Z\tp\nabla_xf(x^\star,u)=-(AZ)\tp\lambda^\star=0
\]
and, ${f}$ being quadratic in ${x}$ with Hessian ${P}$,
\[
f(x^\star+Z\eta,u)=V(u,b)+\tfrac12\eta\tp\widehat P\eta .
\]
By the definition of the integral over ${\mathcal X(u)}$ with ${x_0=x^\star}$ and the normalization of the Gaussian density,
\begin{equation}\label{eq:display}
\begin{aligned}
\int_{\mathcal X(u)}e^{-f(x,u)/T}\,dx
&=e^{-V(u,b)/T}\int_{\R^d}e^{-\eta\tp\widehat P\eta/(2T)}\,d\eta\\
&=e^{-V(u,b)/T}(2\pi T)^{d/2}(\det\widehat P)^{-1/2}.
\end{aligned}
\end{equation}
The integral is therefore finite, and ${p_T}$ is the density of ${x^\star+Z\eta}$ with ${\eta}$ Gaussian with mean zero and covariance ${T\widehat P^{-1}}$. Hence, ${p_T}$ is Gaussian with mean ${x^\star}$ and covariance ${TZ\widehat P^{-1}Z\tp}$, which proves (i), and taking ${-T\log}$ of~\eqref{eq:display} proves (ii).
Since ${A}$ and ${P}$ do not depend on ${(u,b)}$, neither does the last term in (ii), so that ${\mathcal F_T}$ and ${V}$ have the same gradients. Theorem~\ref{thm:pd} gives ${\nabla_uV=y}$. Differentiating ${Ax^\star+Mu=b}$ with respect to ${b}$ gives ${A\,\partial_bx^\star=I}$, so that
\[
\nabla_bV=(\partial_bx^\star)\tp\nabla_xf(x^\star,u)=-(\partial_bx^\star)\tp A\tp\lambda^\star=-\lambda^\star ,
\]
which proves (iii).
\end{proof}

\begin{proof}[Proof of Theorem~\ref{thm:value}]
(i) Adding ${\kappa_i}$ to the objective of the ${i}$-th program adds ${\kappa_i}$ to ${V_i}$, which leaves ${\nabla V_i}$, hence ${\B_i^{\mathrm{pd}}}$, unchanged. For every set ${\mathcal D}$ and functions ${f_1,f_2}$ on ${\mathcal D}$,
\[
\inf_{\mathcal D}\{(f_1+\kappa_1)+(f_2+\kappa_2)\}=\inf_{\mathcal D}\{f_1+f_2\}+\kappa_1+\kappa_2 .
\]

(ii) For each ${u}$, the value behaviors ${\B_1^{\mathrm{v}}}$ and ${\B_2^{\mathrm{v}}}$ contain exactly one point with input ${u}$, namely ${(u,\nabla V_i(u),V_i(u))}$, ${i=1,2}$, and~\eqref{eq:valadd} maps this pair to
\[
\begin{aligned}
&(u,\nabla V_1(u)+\nabla V_2(u),V_1(u)+V_2(u))\\
&\qquad=(u,\nabla(V_1+V_2)(u),(V_1+V_2)(u)),
\end{aligned}
\]
the unique point of the value behavior of ${V_1+V_2}$ with input ${u}$.

(iii) Since ${\widehat P_{12}\succ0}$, its leading principal submatrix ${\widehat P_1}$ and the Schur complement ${\widehat P_2-\widehat L\tp\widehat P_1^{-1}\widehat L}$ of ${\widehat P_1}$ in ${\widehat P_{12}}$ are positive definite, and
\[
\det\widehat P_{12}=\det\widehat P_1\,\det\big(\widehat P_2-\widehat L\tp\widehat P_1^{-1}\widehat L\big)
\]
by the Schur determinant formula~\cite[Ch.~1]{zhang2005schur}. Taking logarithms proves the identity.
(iv) By the integrability assumption, all expectations below are finite, and linearity of the expectation gives
\[
\begin{aligned}
\mathcal{F}_1+\mathcal{F}_2&=2\E_\mu[\log\mu]-\E_\mu[\log\psi_1]-\E_\mu[\log\psi_2]\\
&=\mathcal{F}_{12}+\E_\mu[\log\mu]=\mathcal{F}_{12}-\Ent(\mu).
\end{aligned}
\]
\end{proof}

\end{document}